\theoremstyle{plain}
\newtheorem{prop}{Proposition}[section]
\newtheorem{lem}[prop]{Lemma}
\newtheorem{cor}[prop]{Corollary}
\newtheorem{thm}[prop]{Theorem}
\newtheorem*{prop*}{Proposition}
\newtheorem*{lem*}{Lemma}
\newtheorem*{sublem*}{Sublemma}
\newtheorem*{cor*}{Corollaire}
\newtheorem*{thm*}{Theorem}
\newtheorem*{hypo*}{Hypothesis}
\newtheorem*{question*}{Question}
\newtheorem*{conjecture*}{Conjecture}
\newtheorem*{scholum*}{Scholum}
\newtheorem{defn}[prop]{Definition}
\newtheorem*{defn*}{Definition}
\theoremstyle{slanted}
\newtheorem*{ex*}{Example}
\newtheorem*{exs*}{Examples}
\newtheorem*{rmk*}{Remark}
\newtheorem*{rmks*}{Remarks}
\newtheorem*{notation*}{Notation}
\theoremstyle{definition}
\newtheorem*{con*}{Construction}
\theoremstyle{remark}
\newtheorem*{warning*}{Warning}
\newtheorem*{shortnote*}{Note}
\newtheorem*{claim*}{Claim}
\newtheorem*{axiom*}{Axiom}
\begin{document}

\title{Syzygies of secant varieties of curves of genus 2}

\author{Li Li}

\date{}

\newcommand{\Addresses}{{
  \bigskip
  \footnotesize

  \textsc{Humboldt-Universit\"{a}t zu Berlin, Institut f\"{u}r Mathematik, Unter den Linden 6, 10099 Berlin, Germany}\par\nopagebreak
  \textit{E-mail address}, \texttt{lili@hu-berlin.de}

}}

\maketitle

\begin{abstract}
Ein, Niu and Park showed in \cite{ENP} that if the degree of the line bundle $L$ on a curve of genus $g$ is at least $2g+2k+1$, the $k$-th secant variety of the curve via the embedding defined by the complete linear system of $L$ is normal, projectively normal and arithmetically Cohen-Macaulay, and they also proved some vanishing of the Betti diagrams. However, the length of the linear strand of weight $k+1$ of the resolution of the secant variety $\Sigma_k$ of a curve of $g\geq2$ is still mysterious. In this paper we calculate the complete Betti diagrams of the secant varieties of curves of genus $2$ using Boij-S\"{o}derberg theory. The main idea is to find the pure diagrams that contribute to the Betti diagram of the secant variety via calculating some special positions of the Betti diagram.
\end{abstract}
\section{\label{S3}Introduction}
Let $C$ be a smooth projective curve of genus $g$ over the complex field $\mathbb{C}$. Let $L$ be a very ample line bundle on $C$ such that its complete linear system defines an embedding $C\hookrightarrow\mathbb{P}^r$. For an integer $k\geq0$, define the $k$-th secant variety $\Sigma_k=\Sigma_k(C,L)\subset\mathbb{P}^r$ to be the Zariski closure of the union of all $(k+1)$-secant $k$-planes to $C$ in $\mathbb{P}^r$. Formally, the secant varieties can be realized as images of the projectivizations of secant bundles. Concretely, let $pr:C\times C_k\to C$ be the projection on the first factor, where $C_k$ denotes the $k$-th symmetric product of $C$. We have the canonical morphism $\sigma:C\times C_k\to C_{k+1}$ which sends $(x,\xi)$ to $x+\xi$. Then the $(k+1)$-th secant sheaf $E_{k+1,L}$ or simply $E_L$ is defined to be the rank $k+1$ vector bundle $\sigma_*pr^*L$. The projective bundle is defined to be the projectivization $B^k(L):=\mathbb{P}(E_L)$. Let $\mathcal{O}_{B^k}(1):=\mathcal{O}_{\mathbb{P}(E_L)}(1)$ be the tautological bundle on $B^k$. It induces a morphism $\beta:B^k\to\mathbb{P}H^0(C_{k+1},E_L)=\mathbb{P}^r$. The $k$-th secant variety $\Sigma_k$ can be realized as the image $\beta$. We may assume that $r\geq 2k+3$, because by \cite{LA} the secant varieties of curves always have the expected dimension, meaning that for $\Sigma_k\subset\mathbb{P}^r$, we have $\dim\Sigma_k=\min\{2k+1,r\}$. Concretely, $\Sigma_k=\mathbb{P}^r$ if $r\leq 2k+1$ and $\Sigma_k$ is a hypersurface with degree calculated in Proposition 5.10 of \cite{ENP} if $r=2k+2$. Let $\mathcal{O}_{\Sigma_k}(1)$ be the line bundle on $\Sigma_k$ that is the pullback of $\mathcal{O}_{\mathbb{P}^r}(1)$.

We are interested in the syzygies of $\Sigma_k$. To that end, we introduce some notations. Let $X$ be a projective variety embedded in $\mathbb{P}^r=\mathbb{P}H^0(X,L)$ via a very ample line bundle $L$. Let $S=\mathrm{Sym}H^0(X,L)$ and $R=\bigoplus\limits_{m\geq0}H^0(X,B\otimes mL)$ viewed as a graded $S$-module, where $B$ is a coherent sheaf on $X$. By Hilbert's syzygy theorem, there is a minimal graded free resolution of $R$ over $S$:
$$\cdots\longrightarrow E_2\longrightarrow E_1\longrightarrow E_0\longrightarrow R\longrightarrow0,$$
where $E_p$ is a free $S$-module. We define the Koszul cohomology group $K_{p,q}(X,B;L)$ to be the suitable $\mathbb{C}$-linear space such that $E_p=\bigoplus\limits_qK_{p,q}(X,B;L)\otimes_\mathbb{C}S(-p-q)$.

The Koszul cohomology group $K_{p,q}(X,B;L)$ can be identified with the homology of the following differentials:
$$\bigwedge\limits^{p+1}H^0(L)\otimes H^0(B\otimes(q-1)L)\to\bigwedge\limits^{p}H^0(L)\otimes H^0(B\otimes qL)\to\bigwedge\limits^{p-1}H^0(L)\otimes H^0(B\otimes(q+1)L),$$
 where the morphism $\bigwedge\limits^{p}H^0(L)\otimes H^0(B\otimes qL)\to\bigwedge\limits^{p-1}H^0(L)\otimes H^0(B\otimes(q+1)L)$ is given by $$x_1\wedge\cdots\wedge x_p\otimes M\mapsto\sum\limits_{i=1}^p(-1)^ix_1\wedge\cdots\wedge\widehat{x_i}\wedge\cdots\wedge x_p\otimes x_iM,$$
 for arbitrary $p,q\in\mathbb{Z}$.
 If $B=\mathcal{O}_X$ the structure sheaf on $X$, the Koszul cohomology groups are simply denoted as $K_{p,q}(X,L)$. We have the Betti diagram of which the column $p$ and row $q$ is $\dim K_{p,q}(X,L)=:b_{p,q}$.\\

 \begin{tabular}{|c| c| c| c| c| }
\hline
 &$0$ & $1$ & $2$ & $\cdots$\\
 \hline
 $0$& $b_{0,0}$ &$b_{1,0}$ &$b_{2,0}$ &$\cdots$\\
 \hline
 $1$& $b_{0,1}$ &$b_{1,1}$ &$b_{2,1}$ &$\cdots$\\
 \hline
 $2$& $b_{0,2}$ &$b_{1,2}$ &$b_{2,2}$ &$\cdots$\\
 \hline
 $\vdots$& $\vdots$  & $\vdots$  & $\vdots$    &  $\cdots$   \\
 \hline
\end{tabular}\\

In this paper, we will focus on curves of genus $2$ when the kernel bundle of the evaluation map of the canonical bundle on the curve is a line bundle, and thus easier to describe. We assume that $r\geq 2k+3$, which means that $\deg(L)\geq2k+5$. It was shown from \cite{ENP} that $\Sigma_k$ satisfies the property $N_{k+2,r-2k-3}$ i.e. $\dim K_{i,j}(\Sigma_k,\mathcal{O}_{\Sigma_k}(1))=0$ for $i\leq r-2k-3$ and $j\geq k+2$, and that $\dim K_{r-2k-1,2k+2}(\Sigma_k,\mathcal{O}_{\Sigma_k}(1))=k+2$. Furthermore, by Danila's theorem(see \cite{GD}), we have $\dim K_{i,j}(\Sigma_k,\mathcal{O}_{\Sigma_k}(1))=0$ for $j\leq k$, except from $(i,j)=(0,0)$, for which we have $\dim K_{0,0}(\Sigma_k,\mathcal{O}_{\Sigma_k}(1))=1$. We will prove the followings:
\begin{thm}For the the secant variety $\Sigma_k$ of a genus $2$ curve, we have\\ $\dim K_{r-2k-2,2k+2}(\Sigma_k,\mathcal{O}_{\Sigma_k}(1))=r-2k-1$ and $K_{r-2k-1,2k+1}(\Sigma_k,\mathcal{O}_{\Sigma_k}(1))=0$.
\end{thm}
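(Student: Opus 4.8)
The plan is to move both of these top-right Betti numbers, by duality for the Cohen--Macaulay ring, to the bottom-left corner of the resolution of the canonical module, where they become a single concrete question about a multiplication map. By \cite{ENP} the variety $\Sigma_k$ is arithmetically Cohen--Macaulay, so its homogeneous coordinate ring $R=\bigoplus_{m\ge0}H^0(\Sigma_k,\mathcal O_{\Sigma_k}(m))$ is Cohen--Macaulay of codimension $c:=r-2k-1$ in $S=\operatorname{Sym}H^0(\mathcal O_{\Sigma_k}(1))$, whence $\operatorname{pd}_SR=c$. Dualizing a minimal free resolution of $R$ against $\omega_S=S(-r-1)$ resolves the canonical module $\omega_R=\operatorname{Ext}^c_S(R,\omega_S)$, and comparing graded Betti numbers gives the symmetry $b_{p,q}(\Sigma_k)=b_{c-p,\,2k+2-q}(\omega_R)$, using $r+1-c=2k+2$. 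As $R$ is Cohen--Macaulay of dimension $\ge2$, the module $\omega_R$ is saturated and $(\omega_R)_m=H^0(\Sigma_k,\omega_{\Sigma_k}(m))$, so $b_{c-p,\,2k+2-q}(\omega_R)=\dim K_{c-p,\,2k+2-q}(\Sigma_k,\omega_{\Sigma_k};\mathcal O_{\Sigma_k}(1))$. Under this dictionary the known value $b_{r-2k-1,2k+2}=k+2$ is just $h^0(\omega_{\Sigma_k})=k+2$, while the two numbers I want are $b_{r-2k-1,2k+1}=\dim K_{0,1}$ and $b_{r-2k-2,2k+2}=\dim K_{1,0}$, both for $(\Sigma_k,\omega_{\Sigma_k};\mathcal O_{\Sigma_k}(1))$.

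Both of these I would extract from the single multiplication map
$$\mu\colon H^0(\mathcal O_{\Sigma_k}(1))\otimes H^0(\omega_{\Sigma_k})\longrightarrow H^0(\omega_{\Sigma_k}(1)).$$
Indeed $K_{0,1}(\Sigma_k,\omega_{\Sigma_k};\mathcal O_{\Sigma_k}(1))=\operatorname{coker}\mu$, and once $H^0(\omega_{\Sigma_k}(-1))=0$ is checked the first term of the complex computing $K_{1,0}$ drops out, so $K_{1,0}(\Sigma_k,\omega_{\Sigma_k};\mathcal O_{\Sigma_k}(1))=\ker\mu$. Thus both assertions of the theorem are equivalent to the one statement that $\mu$ is surjective, i.e. that $\omega_R$ is generated in its bottom degree $0$: surjectivity yields $\operatorname{coker}\mu=0$, which is the second assertion, and it forces $\dim\ker\mu=(r+1)\,h^0(\omega_{\Sigma_k})-h^0(\omega_{\Sigma_k}(1))$, which the dimension count below turns into $r-2k-1$, the first assertion.

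For the numerology there is one trap to avoid: for $k\ge1$ the variety $\Sigma_k$ does not have rational singularities, so $\omega_{\Sigma_k}$ cannot be computed by pushing forward $\omega_{B^k}$ from the smooth model $B^k=\mathbb P(E_L)$. One sees the failure directly: $\omega_{B^k}\otimes\mathcal O_{B^k}(1)$ is negative on the $\mathbb P^k$-fibres of $\mathbb P(E_L)\to C_{k+1}$, so $H^0(B^k,\omega_{B^k}\otimes\mathcal O_{B^k}(1))=0$, forcing $\beta_*\omega_{B^k}\subsetneq\omega_{\Sigma_k}$. Instead I would use Riemann--Roch: for $m\gg0$ one has $H^0(\mathcal O_{\Sigma_k}(m))=H^0(B^k,\mathcal O_{B^k}(m))=H^0(C_{k+1},S^mE_L)$, which the genus-$2$ hypothesis makes explicit on $C_{k+1}$ and which determines the Hilbert polynomial $P(m)=\chi(\mathcal O_{\Sigma_k}(m))$. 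Since $\Sigma_k$ is arithmetically Cohen--Macaulay, $H^i(\mathcal O_{\Sigma_k}(j))=0$ for $0<i<2k+1$, so for $m\ge1$ Serre duality gives $h^0(\omega_{\Sigma_k}(m))=(-1)^{2k+1}\chi(\mathcal O_{\Sigma_k}(-m))=-P(-m)$; in particular $h^0(\omega_{\Sigma_k}(1))=-P(-1)=(r+3)(k+1)$. Together with $h^0(\omega_{\Sigma_k})=k+2$ (the input) and $h^0(\omega_{\Sigma_k}(-1))=0$ (equivalently $a(R)=0$, i.e. $\operatorname{reg}(R)=2k+2$), this gives $\dim\ker\mu=(r+1)(k+2)-(r+3)(k+1)=r-2k-1$.

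The crux, and the step I expect to be hardest, is the surjectivity of $\mu$ --- that is, $K_{0,1}(\Sigma_k,\omega_{\Sigma_k};\mathcal O_{\Sigma_k}(1))=0$, the generation of the canonical module in degree $0$. Precisely because the singularities along $\Sigma_{k-1}\subset\operatorname{Sing}\Sigma_k$ are not rational, this cannot be reduced to a vanishing theorem on the smooth model $B^k$ and needs the genuine geometry. Here I would run the paper's Boij--S\"{o}derberg strategy: $R$ being Cohen--Macaulay, the table $(b_{p,q})$ is a nonnegative rational combination of pure diagrams whose degree sequences form a chain, and the vanishing of \cite{GD} below the weight-$(k+1)$ strand together with the range $N_{k+2,r-2k-3}$ of \cite{ENP} restrict the admissible degree sequences enough that the already-known corner value $b_{r-2k-1,2k+2}=k+2$ forces every pure summand meeting the last column to place its generator in degree $r+1$ (row $2k+2$); this is what rules out a contribution at $(r-2k-1,2k+1)$ and pins the value at $(r-2k-2,2k+2)$. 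The obstacle is to make these support constraints tight enough to exclude any degree sequence with $d_c=r$, which is ultimately where one must feed in the explicit genus-$2$ cohomology on $C_{k+1}$; the two entries computed here then become the special positions that launch the determination of the full Betti diagram.
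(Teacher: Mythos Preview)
Your duality reduction is correct and matches the paper exactly: both claims are equivalent to the surjectivity of
\[
\mu\colon H^0(\mathcal O_{\Sigma_k}(1))\otimes H^0(\omega_{\Sigma_k})\longrightarrow H^0(\omega_{\Sigma_k}(1)),
\]
and your dimension count $(r+1)(k+2)-(r+3)(k+1)=r-2k-1$ is the same as the paper's Corollary~2.3. (The paper gets $h^0(\omega_{\Sigma_k}(1))=(r+3)(k+1)$ not from $-P(-1)$ but directly from the identification $H^0(K_{\Sigma_k}(1))\cong H^0(C,K+L)\otimes S^kH^0(C,K)$ supplied by \cite{ENP}; this is cleaner than your Riemann--Roch detour, though yours would also work.)

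The genuine gap is in how you propose to prove surjectivity of $\mu$. You suggest running Boij--S\"oderberg with only the known corner $b_{r-2k-1,2k+2}=k+2$ as input, claiming this ``forces every pure summand meeting the last column to place its generator in degree $r+1$''. It does not. In the paper's parametrisation, pure summands with $i_0=0$ have $e_{r-2k-1}=r$ and contribute to $(r-2k-1,2k+1)$, while those with $i_0\ge1$ have $e_{r-2k-1}=r+1$ and contribute to $(r-2k-1,2k+2)$. Equation~(\ref{XX}) coming from $b_{r-2k-1,2k+2}=k+2$ constrains only $\sum_{i_0>0}c_{\mathbf i}\prod(i_j+j)$; since $\frac{r^2+r-2k-2}{(r+1)(r-k-1)}>1$, this gives merely $\sum_{i_0>0}c_{\mathbf i}\ge\frac{(r+1)(r-k-1)}{r^2+r-2k-2}<1$ and says nothing about $\sum_{i_0=0}c_{\mathbf i}$. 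The paper's Boij--S\"oderberg argument (Theorem~4.3) squeezes the inequalities to equalities only \emph{after} plugging in the second value $b_{r-2k-2,2k+2}=r-2k-1$, which it obtains from the surjectivity of $\mu$. So invoking Boij--S\"oderberg here is circular: it consumes the theorem you are trying to prove.

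What the paper actually does for surjectivity is a direct, hands-on argument on the curve. Using \cite{ENP} one identifies $\mu$ with the explicit map
\[
H^0(C,L)\otimes S^{k+1}H^0(C,K)\longrightarrow H^0(C,K+L)\otimes S^kH^0(C,K),\qquad x\otimes v_1\cdots v_{k+1}\mapsto \sum_i v_ix\otimes v_1\cdots\widehat{v_i}\cdots v_{k+1},
\]
writes $H^0(K)=\langle 1,v\rangle$, and proves by a double induction that every $x\otimes v^m$ lies in the image. The induction is driven by the surjectivity of the multiplication maps $H^0(K)\otimes H^0(L-(p+1)K)\to H^0(L-pK)$ for $-1\le p\le k-1$, which follows from $H^1(L-(p+2)K)=0$; this is exactly where the hypothesis $\deg L\ge 2k+5$ enters. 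This concrete genus-$2$ computation is the missing idea in your proposal.
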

Based on the values calculated above, we can determine the length of the linear strand together with the unknown Koszul cohomology group $K_{r-2k-1,2k+1}$.
\begin{thm}Let $\Sigma_k$ be the the secant variety  of a genus $2$ curve.\\
(1)We have $\dim K_{r-2k-2,2k+1}(\Sigma_k,\mathcal{O}_{\Sigma_k}(1))=r-k-2$.\\
(2)For $k\geq 1$, we have the equivalence $\dim K_{i,k+1}(\Sigma_k,\mathcal{O}_{\Sigma_k}(1))\not=0\iff 1\leq i\leq r-2k-3$, while for $k=0$, we have $\dim K_{i,1}(C,\mathcal{O}_C(1))\not=0\iff 1\leq i\leq r-2$.\\
(3)For $k\geq1$ and $1\leq i\leq r-2k-3$, we have
\begin{eqnarray*}
   &&\dim K_{i,k+1}(\Sigma_k,\mathcal{O}_{\Sigma_k}(1)) \\
   &=&\frac{(r-k-2)![r^3-(i+k+1)r^2-(i+k+2)r+2(k+1)(i+k+1)]}{(k+1)!(i+k+1)(i-1)!(r-i-2k-3)!(r-i-k-2)(r-i-k-1)(r-i-k)}.
\end{eqnarray*}
(4)For $k=0$ and $1\leq i\leq r-2$, we have $$\dim K_{i,1}(C,\mathcal{O}_C(1))=\frac{(r-1)!(r^2-ir-2i-2)}{(i+1)(i-1)!(r-i)!}.$$
\end{thm}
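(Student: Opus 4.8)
The plan is to reconstruct the entire Betti diagram by Boij--S\"oderberg theory and then read off the individual entries from the Herzog--K\"uhl equations, using the preceding theorem to supply the ``special positions'' that fix the decomposition. Since $R=\bigoplus_{m\ge 0}H^0(\Sigma_k,\mathcal{O}_{\Sigma_k}(m))$ is Cohen--Macaulay by \cite{ENP}, its minimal free resolution over $S$ has length equal to the codimension $c=r-2k-1$, so its Betti table $\beta(\Sigma_k)$ is a positive rational combination of pure diagrams attached to a chain of degree sequences. The first task is to pin down the shape of $\beta(\Sigma_k)$. Combining Danila's vanishing (the rows below the weight-$(k+1)$ strand vanish apart from $b_{0,0}=1$), the property $N_{k+2,r-2k-3}$ (which confines the rows above the linear strand to the last two columns), the regularity bound that caps the rows at $2k+2$, and the values $K_{r-2k-1,2k+2}=k+2$, $K_{r-2k-2,2k+2}=r-2k-1$, $K_{r-2k-1,2k+1}=0$ from the introduction and the preceding theorem, I would show that the only possibly nonzero entries are $b_{0,0}=1$, the linear strand in row $k+1$, and the three bottom entries at $(r-2k-2,2k+1)$, $(r-2k-2,2k+2)$, $(r-2k-1,2k+1+1)$.

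With the shape in hand, the crucial observation is that column $r-2k-2$ is the unique column carrying two nonzero entries (rows $2k+1$ and $2k+2$), while the linear strand stops at column $r-2k-3$; hence the decomposition involves exactly two pure diagrams $\pi^{(1)},\pi^{(2)}$, with degree sequences
\[
d^{(1)}=(0,\,k+2,\,k+3,\,\dots,\,r-k-2,\,r,\,r+1),\qquad
d^{(2)}=(0,\,k+2,\,k+3,\,\dots,\,r-k-2,\,r-1,\,r+1),
\]
agreeing except in column $r-2k-2$, where $\pi^{(1)}$ lies in row $2k+2$ and $\pi^{(2)}$ in row $2k+1$. Normalising each by the Herzog--K\"uhl rule $\beta_p=\prod_{j\ne p}|d_j-d_p|^{-1}$, I write $\beta(\Sigma_k)=t_1\pi^{(1)}+t_2\pi^{(2)}$. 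The coefficient $t_1$ is forced by $K_{r-2k-2,2k+2}=r-2k-1$ (only $\pi^{(1)}$ meets that spot), and then $t_2$ is forced by $K_{r-2k-1,2k+2}=k+2$ (both diagrams meet the last column, and $t_1$ is already known). The identities $b_{0,0}=1$ and $K_{r-2k-1,2k+1}=0$ then serve as consistency checks on the two coefficients.

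Once $t_1,t_2$ are determined, the three parts follow by evaluation. Part (1) is the value of $t_2\pi^{(2)}$ at $(r-2k-2,2k+1)$. For part (3) I would add the two Herzog--K\"uhl contributions at $(i,k+1)$: both share the factors from column $0$ and from the linear strand, while $\pi^{(1)}$ contributes $(r-i-k-1)^{-1}$ and $\pi^{(2)}$ contributes $(r-i-k-2)^{-1}$; over a common denominator these produce the linear-in-$i$ numerator $P(i)=r^3-(i+k+1)r^2-(i+k+2)r+2(k+1)(i+k+1)$, and collapsing the products of consecutive integers into factorials yields the stated closed form. Part (2) is then immediate, since both $\pi^{(1)}$ and $\pi^{(2)}$ carry the strand with strictly positive entries across columns $1,\dots,r-2k-3$ and nowhere else, so the sum is nonzero exactly there. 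The case $k=0$ is isolated as part (4) because rows $k+1$ and $2k+1$ coincide, so the column-$(r-2)$ entry of $\pi^{(2)}$ now falls \emph{inside} the linear strand and extends its nonvanishing range to $i=r-2$; rewriting the three consecutive factors as $(r-i)!$ gives the cleaner expression.

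The hard part is the first step: rigorously establishing the exact shape of $\beta(\Sigma_k)$, in particular the vanishing of the intermediate rows and the confinement of the bottom strand to the last two columns, and the fact that the linear strand terminates precisely at column $r-2k-3$ (equivalently, that no third pure diagram with the strand reaching column $r-2k-2$ enters the decomposition). This is exactly where the genus-$2$ hypothesis is indispensable, since the assumption that the kernel of the evaluation map of the canonical bundle is a line bundle is what keeps the relevant cohomology of line bundles on $C_{k+1}$ tractable; without a clean shape the Boij--S\"oderberg decomposition could a priori involve further pure diagrams, and ruling these out is precisely what the vanishing special positions of the preceding theorem (together with the Hilbert-function constraint that the diagrams must sum correctly) are for. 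Once the shape is fixed, the two-diagram structure is forced, and the remaining work---solving for $t_1,t_2$, checking them against the overdetermined set of known entries, and simplifying the Herzog--K\"uhl sum into the stated rational function---is careful but routine algebra.
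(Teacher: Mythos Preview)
Your strategy is the paper's strategy: Boij--S\"oderberg decomposition, pin down the coefficients from the two ``special'' values $K_{r-2k-1,2k+2}=k+2$ and $K_{r-2k-2,2k+2}=r-2k-1$, and read off the rest. The ingredients you list are exactly those used.

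However, the order of your argument is inverted relative to what actually works, and this hides the one genuinely nontrivial step. You propose to first establish the full shape of $\beta(\Sigma_k)$ (in particular that the linear strand stops at column $r-2k-3$), and \emph{then} conclude that only the two pure diagrams $\pi^{(1)},\pi^{(2)}$ appear. But the termination of the linear strand at $r-2k-3$ is part of statement~(2); it is not available in advance, and nothing in \cite{ENP}, Danila, or \cite{SV09} gives it (the latter only gives the lower bound $r-2k-3$ on the length). So you cannot assume the shape and then solve for two coefficients.

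What the paper actually does is the reverse: it works with the a priori larger family of pure diagrams $\pi_k(\mathbf{i};d)$ indexed by $\mathbf{i}=(i_0,\dots,i_k)$ with $0\le i_0\le\cdots\le i_k\le 2$, and uses the two known values together with the multiplicity constraint $\sum_{\mathbf{i}}c_{\mathbf{i}}=1$ to force $c_{\mathbf{i}}=0$ for all $\mathbf{i}$ other than $(2,\dots,2)$ and $(1,2,\dots,2)$. The mechanism is an inequality that collapses to an equality. From $K_{r-2k-2,2k+2}=r-2k-1$ one computes $c_{(2,\dots,2)}$ exactly; substituting this into the relation coming from $K_{r-2k-1,2k+2}=k+2$ leaves an equation for $\sum_{i_0=1}c_{\mathbf{i}}\prod_{j}(i_j+j)$. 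Now $\prod_{j=0}^k(i_j+j)\le (k+2)!/2$ for every $\mathbf{i}$ with $i_0=1$, with equality only at $(1,2,\dots,2)$, and $\sum_{i_0=1}c_{\mathbf{i}}\le 1-c_{(2,\dots,2)}$. Plugging in, both inequalities are forced to be equalities: the first kills every $\mathbf{i}$ with $i_0=1$ other than $(1,2,\dots,2)$, and the second kills every $\mathbf{i}$ with $i_0=0$. Only then do the shape, the length of the linear strand, and the value of $K_{r-2k-2,2k+1}$ fall out. Your proposal gestures at ``the Hilbert-function constraint'' in the last paragraph, but this squeeze is the whole point, and without it the argument as you wrote it is circular.
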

Summarizing the results above, we deduce that the Betti diagram of $\Sigma_k$($k\geq1$) is of the following shape:\\

\begin{tabular}{|c| c| c| c| c| c| c| c|}
\hline
 &$0$ & $1$ & $2$ & $\cdots$ & $r-2k-3$ & $r-2k-2$ & $r-2k-1$\\
 \hline
 $0$& $1$ & & & & & &\\
 \hline
 $\vdots$&   &   &     &   &        &         &    \\
 \hline
 $k+1$& & * & * &  $\cdots$ & *  & &\\
 \hline
 $\vdots$&   &   &     &   &        &         &    \\
 \hline
  $2k+1$&   &   &     &   &        &    $r-k-2$     &    \\
  \hline
   $2k+2$&   &   &     &   &        &   $r-2k-1$      &$k+2$    \\
 \hline
\end{tabular}\\

where the blanks are $0$ and the asterisks stand for non-zero terms, the values of which were shown in Theorem 1.2 (3).
\section{\label{S3}The vanishing of $K_{r-2k-1,2k+1}$}
Before proving the main theorem in this section, it is useful to introduce the duality property of Koszul cohomology.
\begin{prop}Let $L$ be a globally generated line bundle on a smooth projective scheme $X$ of dimension $n$ and $B$ an arbitrary line bundle on $X$. If $H^i(X,B\otimes L^{q-i})=H^i(X,B\otimes L^{q-i+1})=0$ for $i=1,\cdots,n-1$, then we have the isomorphism ${K_{p,q}(X,B;L)^\vee\cong K_{r-n-p,n+1-q}(X,\omega_X\otimes B^\vee;L)}$, where $r=h^0(X,L)-1$ and $\omega_X$ is the canonical sheaf on $X$.
\end{prop}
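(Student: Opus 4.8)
The statement is the Koszul duality theorem in the form due to Green, and the plan is to deduce it by dualizing the three‑term complex that computes $K_{p,q}$ and reinterpreting the result through the kernel (syzygy) bundle. Write $W=H^0(X,L)$, so $\dim W=r+1$, and let $M_L$ denote the kernel of the evaluation map, sitting in the short exact sequence $0\to M_L\to W\otimes\mathcal{O}_X\to L\to 0$; since $L$ is globally generated this is exact and $M_L$ is locally free of rank $r$. Taking exterior powers and splicing the resulting sequences produces, for each $m$, an exact complex of vector bundles
$$0\to\wedge^{m}M_L\to\wedge^{m}W\otimes\mathcal{O}_X\to\wedge^{m-1}W\otimes L\to\cdots\to W\otimes L^{m-1}\to L^{m}\to 0,$$
and I will also use the two identities $\det M_L\cong L^{-1}$ and $(\wedge^{a}M_L)^\vee\cong\wedge^{r-a}M_L\otimes L$ (both up to the one–dimensional twist $\wedge^{r+1}W$, which is harmless for dimension counts).

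The first step is to realize $K_{p,q}(X,B;L)$ sheaf‑theoretically. Applying $H^0$ to the complex of sheaves
$$\cdots\to\wedge^{p+1}W\otimes B\otimes L^{q-1}\to\wedge^{p}W\otimes B\otimes L^{q}\to\wedge^{p-1}W\otimes B\otimes L^{q+1}\to\cdots,$$
whose differential is the Koszul differential built from multiplication $L^{j}\xrightarrow{x}L^{j+1}$, recovers exactly the complex whose middle cohomology is $K_{p,q}(X,B;L)$. Because the evaluation map is surjective, the kernels and images of these Koszul differentials are the twisted bundles $\wedge^{\bullet}M_L\otimes B\otimes L^{\bullet}$ supplied by the spliced sequences above. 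I would then chase the cohomology long exact sequences of those short exact sequences (equivalently, run the hypercohomology spectral sequence of the relevant bounded truncation): the vanishing hypotheses $H^i(X,B\otimes L^{q-i})=H^i(X,B\otimes L^{q-i+1})=0$ for $1\le i\le n-1$ are precisely what is needed to annihilate every intermediate‑dimension contribution, collapsing the spectral sequence so that $K_{p,q}(X,B;L)$ is identified with a single sheaf‑cohomology group of a twist of $\wedge^{p+1}M_L\otimes B$.

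With this identification in hand, the duality becomes a formal consequence of Serre duality on $X$. Applying $H^{\bullet}(X,\mathcal{F})^\vee\cong H^{n-\bullet}(X,K_X\otimes\mathcal{F}^\vee)$ to the sheaf‑cohomology group found above converts $B$ into $K_X\otimes B^\vee$ and turns the relevant $\wedge^{p+1}M_L$ into $(\wedge^{p+1}M_L)^\vee\cong\wedge^{r-p-1}M_L\otimes L$ via the self‑duality identity; simultaneously $\dim W=r+1$ lets me rewrite the exterior powers so that the homological index becomes $r-n-p$. Re‑running the same sheaf‑complex machinery, now for the line bundle $K_X\otimes B^\vee$, re‑identifies the resulting group as $K_{r-n-p,\,n+1-q}(X,K_X\otimes B^\vee;L)$. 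The net effect of Serre duality (contributing the dimension $n$) together with the reindexing $p\mapsto r+1-p$ of the exterior powers is exactly the shift of both indices that produces $r-n-p$ and $n+1-q$, rather than the naive $r+1-p$ and $-q$ coming from dualizing exterior powers alone.

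The main obstacle is the bookkeeping of the middle step: verifying that the spectral sequence really does collapse to a single term, and that the stated hypotheses — vanishing of $H^i$ for the two strings of twists $B\otimes L^{q-i}$ and $B\otimes L^{q-i+1}$ across the intermediate dimensions $1\le i\le n-1$ — are exactly (no more, no less) what is consumed in the collapse on both sides of the duality. Tracking the index shifts so that they land on $r-n-p$ and $n+1-q$ is the delicate point; the discrepancy between these and the naive indices is absorbed entirely by the passage from top cohomology back to global sections through the bundles $\wedge^{\bullet}M_L$. As a consistency check, when $n=1$ the hypothesis range is empty and the argument is unconditional, recovering the classical duality $K_{p,q}(C,B;L)^\vee\cong K_{r-1-p,\,2-q}(C,K_C\otimes B^\vee;L)$ for curves.
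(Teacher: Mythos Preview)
Your outline is correct: the kernel bundle $M_L$, the identification of $K_{p,q}$ with a single sheaf-cohomology group of a twist of $\wedge^{p+1}M_L$ under the stated intermediate vanishing, Serre duality, and the isomorphism $(\wedge^{a}M_L)^\vee\cong\wedge^{r-a}M_L\otimes L$ are exactly the ingredients of Green's duality, and the index bookkeeping you describe does land on $(r-n-p,\,n+1-q)$.

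The paper, however, does not give any argument here at all; its proof is a one-line reference to Theorem~2.25 of \cite{AN} (Aprodu--Nagel). That theorem is proved by precisely the method you sketch, so your proposal is not a different route but rather an unpacking of the cited source. There is nothing to correct; just be aware that where you write ``identified with a single sheaf-cohomology group'' the actual statement used in the reference is the chain of isomorphisms $K_{p,q}(X,B;L)\cong H^{i}(X,\wedge^{p+i}M_L\otimes B\otimes L^{q-i})$ for each $i$ in the range permitted by the vanishing hypotheses, and one takes $i$ large enough to reach the top cohomology and invoke Serre duality.
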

\begin{proof}The proof is essentially the same as that of Theorem 2.25 of \cite{AN}. However, the smoothness assumed there is not necessary because the ingredient is the Serre's duality.
\end{proof}
From now on, we assume that $C$ is a smooth projective curve of genus $2$ over $\mathbb{C}$ and $L$ is a line bundle of degree at least $2k+5$. The complete system of $L$ defines an embedding $C\hookrightarrow\mathbb{P}^r$ with $r\geq2k+3$. Let $\Sigma_k\subset\mathbb{P}^r$ be the $k$-th secant variety of $C$. Since all intermediate cohomology $H^i(\Sigma_k,\mathcal{O}_{\Sigma_k}(l))$ with $l\in\mathbb{Z}$ and $1\leq i\leq 2k$ vanish by Theorem 5.2 and Theorem 5.8 of \cite{ENP}, the duality property holds. In other words, we have ${K_{r-2k-1,2k+1}(\Sigma_k,\mathcal{O}_{\Sigma_k}(1))\cong K_{0,1}(\Sigma_k,\omega_{\Sigma_k};\mathcal{O}_{\Sigma_k}(1))^\vee}$, where $\omega_{\Sigma_k}$ is the canonical sheaf on $\Sigma_k$. Therefore to show the vanishing of this group, it is equivalent to show the following:
\begin{thm}The morphism
$$\phi:H^0(\Sigma_k,\mathcal{O}_{\Sigma_k}(1))\otimes H^0(\Sigma_k,\omega_{\Sigma_k})\to H^0(\Sigma_k,\omega_{\Sigma_k}(1))$$
is surjective with the assumption $r\geq 2k+3$.
\end{thm}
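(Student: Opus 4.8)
The plan is to transport the map $\phi$ to the smooth resolution $\beta\colon B^k=\mathbb{P}(E_L)\to\Sigma_k$ and then push it down the projective bundle $\pi\colon B^k\to C_{k+1}$, so that the surjectivity becomes a multiplication statement on the symmetric product $C_{k+1}$ that the genus-$2$ hypothesis renders explicit. First I would record the identifications coming from the bundle structure: $\beta^*\mathcal{O}_{\Sigma_k}(1)=\mathcal{O}_{B^k}(1)$, $\pi_*\mathcal{O}_{B^k}(m)=\mathrm{Sym}^mE_L$, and $H^0(\Sigma_k,\mathcal{O}_{\Sigma_k}(1))=H^0(C_{k+1},E_L)$, together with the relative canonical formula $\omega_{B^k}\cong\pi^*(\omega_{C_{k+1}}\otimes\det E_L)\otimes\mathcal{O}_{B^k}(-(k+1))$ and relative duality $R^k\pi_*\omega_{B^k}\cong\omega_{C_{k+1}}$. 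These let me move cohomology freely between $\Sigma_k$, $B^k$, and $C_{k+1}$.

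Next I would identify the source and target of $\phi$ as cohomology on $C_{k+1}$. The delicate point is the dualizing sheaf: one cannot simply write $K_{\Sigma_k}=\beta_*\omega_{B^k}$, since that pushforward has no global sections for $k\ge 1$, whereas applying Proposition 2.1 with $p=r-2k-1,\ q=2k+2$ gives $K_{r-2k-1,2k+2}(\Sigma_k,\mathcal{O}_{\Sigma_k}(1))^\vee\cong K_{0,0}(\Sigma_k,K_{\Sigma_k};\mathcal{O}_{\Sigma_k}(1))=H^0(\Sigma_k,K_{\Sigma_k})$, so that $h^0(\Sigma_k,K_{\Sigma_k})=k+2$. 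In other words $\Sigma_k$ does not have rational singularities, and the extra $k+2$ sections are accounted for by $H^1(C,\mathcal{O}_C)$ propagated through the resolution. Using the explicit dualizing sheaf and the cohomology of $\mathcal{O}_{\Sigma_k}(j)$ established in \cite{ENP} (the same vanishing that licenses Proposition 2.1), I would express both $H^0(\Sigma_k,K_{\Sigma_k})$ and $H^0(\Sigma_k,K_{\Sigma_k}(1))$ as global sections of explicit sheaves on $C_{k+1}$ assembled from $\omega_{C_{k+1}}$, $\det E_L$ and $\mathrm{Sym}^\bullet E_L$, and simultaneously rewrite $\phi$ as the induced evaluation map $H^0(C_{k+1},E_L)\otimes H^0(C_{k+1},\mathcal{F})\to H^0(C_{k+1},E_L\otimes\mathcal{F})$ for the appropriate sheaf $\mathcal{F}$.

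With $\phi$ realized on $C_{k+1}$, I would prove surjectivity by a base-point-free-pencil argument adapted to genus $2$. The governing feature, emphasised in the introduction, is that the kernel bundle $M_{K_C}$ of the evaluation $H^0(C,K_C)\otimes\mathcal{O}_C\to K_C$ is a line bundle, namely $M_{K_C}\cong K_C^{-1}$ of degree $-2$. This lets me resolve the relevant sheaf by a two-term Koszul-type complex built from the pencil $|K_C|$, so that I expect $\mathrm{coker}\,\phi$ to be confined to a single $H^1$ of a line bundle twisted by something of degree at least $2g-1=3$, which vanishes by positivity. The case $k=0$ is exactly this computation on the curve itself: from the Koszul sequence $0\to L\otimes K_C^{-1}\to L^{\oplus 2}\to L\otimes K_C\to 0$ the base-point-free-pencil trick gives $\mathrm{coker}\,\phi\hookrightarrow H^1(C,L\otimes K_C^{-1})=0$, since $\deg(L\otimes K_C^{-1})=\deg L-2\ge 3$. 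For higher $k$ I would organise the same mechanism either through the Abel--Jacobi structure $C_{k+1}\to\mathrm{Pic}^{k+1}(C)$ (a $\mathbb{P}^{k-1}$-bundle over the abelian surface $J$ for $k\ge 2$) or by induction on $k$ along the flag $\Sigma_{k-1}\subset\Sigma_k$, propagating surjectivity from the curve.

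The hard part is the middle step: correctly pinning down $K_{\Sigma_k}$ and its twist on the resolution, that is, keeping track of the non-rational part of the singularities through $R^q\beta_*$ and matching the abstractly defined $\phi$ with the concrete symmetric-product multiplication. Once that dictionary is in place, the genus-$2$ surjectivity is comparatively soft, reducing to the vanishing of an $H^1$ of a line bundle of degree $\ge 2g-1$, exactly as in the classical base-point-free-pencil trick; the genus-$2$ hypothesis is essential precisely because it forces $M_{K_C}$ to be a line bundle, which is what makes the Koszul complex two-term and the cokernel estimate clean.
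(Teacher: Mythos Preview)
Your proposal is a plan rather than a proof, and the step you yourself flag as ``the hard part'' --- identifying $H^0(\Sigma_k,K_{\Sigma_k})$, $H^0(\Sigma_k,K_{\Sigma_k}(1))$, and the explicit shape of $\phi$ --- is never carried out. You propose to extract this from the geometry of $B^k\to C_{k+1}$ while worrying about the failure of rational singularities, but none of that is actually done; the vague alternatives offered for $k\ge 1$ (Abel--Jacobi structure, or induction along $\Sigma_{k-1}\subset\Sigma_k$) have no content without specifying the sheaf $\mathcal{F}$ and checking its evaluation map. As written, only the $k=0$ case is genuinely proved.

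The paper sidesteps the entire difficulty you highlight by quoting Theorem~5.8 of \cite{ENP}, which gives directly
\[
H^0(\Sigma_k,K_{\Sigma_k})\cong S^{k+1}H^0(C,K_C),\qquad
H^0(\Sigma_k,K_{\Sigma_k}(1))\cong H^0(C,K_C+L)\otimes S^{k}H^0(C,K_C),
\]
together with the Koszul-type formula $x\otimes v_1\cdots v_{k+1}\mapsto\sum_i v_ix\otimes v_1\cdots\widehat{v_i}\cdots v_{k+1}$ for $\phi$. With these identifications in hand there is no need for $B^k$ or $C_{k+1}$: writing $H^0(K_C)=\langle 1,v\rangle$, the paper shows by an explicit induction on $m$ that every generator $x\otimes v^m$ of the target lies in the image, repeatedly peeling off factors of $v$ from $x$. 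The cohomological input is not a single $H^1$ vanishing but the whole chain of surjections
\[
\psi_{p+1}\colon H^0(K_C)\otimes H^0(L-(p+1)K_C)\twoheadrightarrow H^0(L-pK_C),\qquad -1\le p\le k-1,
\]
each of which is the base-point-free-pencil trick applied to $|K_C|$ and reduces to $H^1(L-(p+2)K_C)=0$; this is where $r\ge 2k+3$ (i.e.\ $\deg L\ge 2k+5$) is used. Your instinct that the genus-$2$ hypothesis enters through $M_{K_C}\cong K_C^{-1}$ is exactly right, but the argument is this concrete inductive computation, not an abstract evaluation-map statement on $C_{k+1}$.
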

\begin{proof}By the construction of the secant varieties and Theorem 5.8 of \cite{ENP}, we know that $H^0(\Sigma_k,\mathcal{O}_{\Sigma_k}(1))=H^0(C,L)$, $H^0(\Sigma_k,\omega_{\Sigma_k})=S^{k+1}H^0(C,K)$ and\\ ${H^0(\Sigma_k,\omega_{\Sigma_k}(1))=H^0(C,K+L)\otimes S^kH^0(C,K)}$, where $K$ is the canonical divisor on $C$ and the morphism $\phi$ becomes $$H^0(C,L)\otimes S^{k+1}H^0(C,K)\to H^0(C,K+L)\otimes S^kH^0(C,K),$$ given by $$x\otimes v_1\cdots v_{k+1}\mapsto\sum\limits_{i=1}^{k+1}v_ix\otimes v_1\cdots\widehat{v_i}\cdots v_{k+1}.$$
We still denote it by $\phi$ and assume $J=\mathrm{Im}(\phi)$. Write $H^0(C,K)=\mathrm{span}\{1,v\}$, where $1$ and $v$ are seen as rational functions on an affine open subset of $C$.

From now on I omit the symbol $C$ for short. As long as the multiplication morphism ${\psi_0:H^0(K)\otimes H^0(L)\to H^0(K+L)}$ is surjective, the elements in $H^0(K+L)\otimes S^kH^0(K)$ are spanned by elements of two types, $x\otimes v^m$ and $vx\otimes v^m$, where $0\leq m\leq k$ and $x\in H^0(L)$. Clearly the element $vx\otimes v^k$ is in $J$ since $\phi(x\otimes v^{k+1})=(k+1)vx\otimes v^k$ and $x\otimes 1$ is also in $J$ since $\phi(x\otimes1)=(k+1)x\otimes 1$. We only need to show $x\otimes v^m\in J$ for $0\leq m\leq k$ and $x\in H^0(L)$. Once we have shown this, we would have $vx\otimes v^{m-1}\in J$ for all $x\in H^0(L),1\leq m\leq k$, because $\phi(x\otimes v^m)$ is a combination of $x\otimes v^m$ and $vx\otimes v^{m-1}$ for all $1\leq m\leq k$. We prove it by induction on $m$.

For $x\otimes 1$, it was discussed above.

Assume that $x\otimes v^{m-1}\in J$ for all $x\in H^0(L)$ and $1\leq m\leq k$. We want to show $x\otimes v^{m}\in J$. If we further assume that $\psi_1:H^0(K)\otimes H^0(L-K)\to H^0(L)$ is surjective, we can write $x=vx_1+y_1$ for some $x_1,y_1\in H^0(L-K)$. Note that $y_1\otimes v^m$ is in $J$ because $\phi(y_1\otimes v^m)$ is a combination of $vy_1\otimes v^{m-1}$ and $y_1\otimes v^m$, and we have $vy_1\otimes v^{m-1}\in J$ by the induction hypothesis. Therefore we are left to show $vx_1\otimes v^m\in J$ for $x_1\in H^0(L-K)$.

Assume that we have reduced the problem to show $v^px_p\otimes v^m\in J$ for all $x_p\in H^0(L-pK)$. If we further assume that $\psi_{p+1}:H^0(K)\otimes H^0(L-(p+1)K)\to H^0(L-pK)$ is surjective, then we can write $v^px_p=v^p(vx_{p+1}+y_{p+1})$ for some $x_{p+1},y_{p+1}\in H^0(L-(p+1)K)$. Note that $v^py_{p+1}\otimes v^m\in J$ since $\phi(v^py_{p+1}\otimes v^m)$ is a combination of $v^py_{p+1}\otimes v^m$ and $v^{p+1}y_{p+1}\otimes v^{m-1}$ and the latter is in $J$ by the induction hypothesis. We are left to show $v^{p+1}x_{p+1}\otimes v^m\in J$. We do the step above for $k$ times. Then we are left to show $v^kx_k\otimes v^m\in J$ for $x_k\in H^0(L-kK)$.

To show $v^kx_k\otimes v^m\in J$, it suffices to show $v^{k-1}v_k\otimes v^{m+1}\in J$ since $\phi(v^{k-1}v_k\otimes v^{m+1})$ is a combination of $v^kx_k\otimes v^m$ and $v^{k-1}v_k\otimes v^{m+1}$. Similarly, to show $v^{k-1}v_k\otimes v^{m+1}\in J$, it suffices to show $v^{k-2}v_k\otimes v^{m+2}\in J$. Finally we are left to show $v^{m}x_k\otimes v^{k}\in J$. This is clear since $\phi(v^{m-1}x_k\otimes v^{k+1})=(k+1)v^{m}x_k\otimes v^{k}$.

For the deduction above, we need the surjectivity of the morphisms
$$\psi_{p+1}:H^0(K)\otimes H^0(L-(p+1)K)\to H^0(L-pK)$$
for $-1\leq p\leq k-1$. Note that the kernel of the morphism $H^0(K)\otimes\mathcal{O}_C\to K$ is a line bundle and hence it is $-K$. Consider the exact sequence
$$0\to L\otimes K^{-(p+2)}\to H^0(K)\otimes L\otimes K^{-(p+1)}\to L\otimes K^{-p}\to0.$$
 To verify the desired surjectivity, it suffices to check $H^1(L-(p+2)K)=0$ i.e. $H^0((p+3)K-L)=0$. Note that $\deg((p+3)K-L)\leq 2(k+2)-\deg(L)=2k+2-r<0$. Therefore its global section group vanishes.
\end{proof}
\begin{cor}We have $\dim K_{r-2k-2,2k+2}(\Sigma_k,\mathcal{O}_{\Sigma_k}(1))=r-2k-1$.
\end{cor}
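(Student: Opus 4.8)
The plan is to read off this value from the surjectivity of $\phi$ established in Theorem 2.2, via the duality of Proposition 2.1 together with a Riemann--Roch count on $C$. Since the content of the corollary is essentially dual to Theorem 2.2, the argument is short: the only points requiring care are the bookkeeping of the duality indices and one auxiliary vanishing.

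First I would apply Proposition 2.1 with $X=\Sigma_k$, $n=\dim\Sigma_k=2k+1$, $B=\mathcal{O}_{\Sigma_k}$, $L=\mathcal{O}_{\Sigma_k}(1)$, $p=r-2k-2$ and $q=2k+2$. The hypotheses hold because all intermediate cohomology of $\mathcal{O}_{\Sigma_k}(l)$ vanishes, exactly as noted before Theorem 2.2. Computing the dual indices, $r-n-p=1$ and $n+1-q=0$, so
$$K_{r-2k-2,2k+2}(\Sigma_k,\mathcal{O}_{\Sigma_k}(1))^\vee\cong K_{1,0}(\Sigma_k,K_{\Sigma_k};\mathcal{O}_{\Sigma_k}(1)).$$
It therefore suffices to compute the dimension of the right-hand group.

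Next I would identify $K_{1,0}(\Sigma_k,K_{\Sigma_k})$ as the homology of
$$\textstyle\bigwedge^2H^0(\mathcal{O}_{\Sigma_k}(1))\otimes H^0(K_{\Sigma_k}(-1))\longrightarrow H^0(\mathcal{O}_{\Sigma_k}(1))\otimes H^0(K_{\Sigma_k})\xrightarrow{\ \phi\ }H^0(K_{\Sigma_k}(1)),$$
in which the right-hand arrow is exactly the map $\phi$ of Theorem 2.2. The incoming term carries $H^0(\Sigma_k,K_{\Sigma_k}(-1))$, which vanishes; I would obtain this from the same \cite{ENP} computation of the twisted canonical sheaf used above, the vanishing reflecting that $\deg(K-L)=2-\deg L<0$ on $C$. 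Hence $K_{1,0}(\Sigma_k,K_{\Sigma_k})=\ker\phi$, and because $\phi$ is surjective by Theorem 2.2,
$$\dim K_{1,0}(\Sigma_k,K_{\Sigma_k})=\dim\bigl(H^0(\mathcal{O}_{\Sigma_k}(1))\otimes H^0(K_{\Sigma_k})\bigr)-\dim H^0(K_{\Sigma_k}(1)).$$

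Finally I would substitute the dimensions recorded in the proof of Theorem 2.2: $H^0(\mathcal{O}_{\Sigma_k}(1))=H^0(C,L)$ has dimension $r+1$; $H^0(K_{\Sigma_k})=S^{k+1}H^0(C,K)$ has dimension $k+2$; and $H^0(K_{\Sigma_k}(1))=H^0(C,K+L)\otimes S^kH^0(C,K)$ has dimension $(r+3)(k+1)$, using $h^0(C,K+L)=r+3$ from Riemann--Roch together with $\deg L=r+2$. The count gives $(r+1)(k+2)-(r+3)(k+1)=r-2k-1$, which is the claimed value. I expect the only non-formal ingredient to be the vanishing $H^0(\Sigma_k,K_{\Sigma_k}(-1))=0$; the rest is the duality translation and an arithmetic simplification.
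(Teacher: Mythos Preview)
Your proposal is correct and follows essentially the same route as the paper's own proof: duality to $K_{1,0}(\Sigma_k,K_{\Sigma_k};\mathcal{O}_{\Sigma_k}(1))$, vanishing of $H^0(K_{\Sigma_k}(-1))$, surjectivity of $\phi$ from Theorem~2.2, and the arithmetic $(r+1)(k+2)-(r+3)(k+1)=r-2k-1$. The only cosmetic difference is that the paper obtains $H^0(\Sigma_k,K_{\Sigma_k}(-1))=0$ via Serre duality as $H^{2k+1}(\Sigma_k,\mathcal{O}_{\Sigma_k}(1))^\vee=0$ (Theorem~5.2 of \cite{ENP}), rather than by invoking a twisted-canonical formula from \cite{ENP} directly.
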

\begin{proof}From the duality property we have $K_{r-2k-2,2k+2}(\Sigma_k,\mathcal{O}_{\Sigma_k}(1))^\vee=K_{1,0}(\Sigma_k,\omega_{\Sigma_k};\mathcal{O}_{\Sigma_k}(1))$, where the latter group is the homology of the sequence
$$\bigwedge\limits^2H^0(\Sigma_k,\mathcal{O}_{\Sigma_k}(1))\otimes H^0(\Sigma_k,\omega_{\Sigma_k}(-1))\to H^0(\Sigma_k,\mathcal{O}_{\Sigma_k}(1))\otimes H^0(\Sigma_k,\omega_{\Sigma_k})\to H^0(\Sigma_k,\omega_{\Sigma_k}(1)).$$
We have $H^0(\omega_{\Sigma_k}(-1))=H^{2k+1}(\mathcal{O}_{\Sigma_k}(1))^\vee=0$ by Theorem 5.2 of \cite{ENP}. By Theorem 2.2, the map on the right is surjective, Therefore

$\begin{array}{lcl}
\dim K_{r-2k-2,2k+2}&=&\dim\ker(H^0(\mathcal{O}_{\Sigma_k}(1))\otimes H^0(\omega_{\Sigma_k})\to H^0(\omega_{\Sigma_k}(1)))\\
&=&\dim H^0(\mathcal{O}_{\Sigma_k}(1))\otimes H^0(\omega_{\Sigma_k})-\dim H^0(\omega_{\Sigma_k}(1))\\
&=&\dim H^0(L)\otimes S^{k+1}H^0(K)-\dim H^0(K+L)\otimes S^kH^0(K)\\
&=&(r+1)(k+2)-(r+3)(k+1)\\
&=&r-2k-1.
\end{array}$\\
\end{proof}
\section{\label{S3}Boij-S\"{o}derberg theory}
In this section we recall some useful facts from Boij-S\"{o}derberg theory. One may refer to \cite{ES09} and \cite{TL} for this part.
\begin{defn}The rational vector space $\mathbb{B}=\bigoplus\limits_{-\infty}^\infty\mathbb{Q}^{n+1}$ is called the space of rational Betti diagrams with $n+1$ columns and rows numbered by integers.
\end{defn}
\begin{defn}A pure diagram $\beta(\mathbf{e})$ is the diagram characterised by a degree sequence \\ ${\mathbf{e}=(e_0,\cdots,e_n)\in\mathbb{Z}^{n+1}}$ with $e_0<e_1<\cdots<e_n$, for which the Betti numbers are defined by
$$
\dim K_{p,q}(\beta(\mathbf{e}))=\left\{
\begin{aligned}
&n!\prod\limits_{i\not=e_p}\frac{1}{|e_i-e_p|},\quad p+q=e_p\\
&0,\quad else
\end{aligned}
\right.
$$
\end{defn}
From Theorem 0.1 and Theorem 0.2 of \cite{ES09}, we can get the theorem of decompositions of the Betti diagram of a Cohen-Macaulay module into combinations of pure diagrams.
\begin{thm}Let $S$ be the polynomial ring $\mathbb{C}[x_0,\cdots,x_r]$. For any finitely generated Cohen-Macaulay graded $S$-module $M$, its Betti diagram $\beta(M)$ can be (not necessarily uniquely) decomposed as $\sum\limits_{\mathbf{e}}c_\mathbf{e}\beta(\mathbf{e})$ with $c_\mathbf{e}\geq0$.
\end{thm}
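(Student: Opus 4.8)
The plan is to exhibit every Cohen-Macaulay Betti diagram as a point of the rational cone in $\mathbb{B}$ generated by the pure diagrams $\beta(\mathbf{e})$, and to extract the nonnegative coefficients $c_{\mathbf{e}}$ by a greedy ``peeling'' algorithm. First I would partially order the degree sequences by $\mathbf{d}\preceq\mathbf{e}$ if and only if $d_i\leq e_i$ for all $i$, and record the two facts about a minimal free resolution of $M$ that make the algorithm run. Writing $\beta_{p,q}=\dim K_{p,q}(M)$ and indexing a generator of $E_p$ by its total degree $p+q$, the quantities $\underline{e}_p=\min\{p+q:\beta_{p,q}\neq0\}$ and $\overline{e}_p=\max\{p+q:\beta_{p,q}\neq0\}$ are each strictly increasing in $p$. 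For the bottom strand this is forced by minimality: a minimal generator $g$ of $E_{p+1}$ cannot lie in $\mathfrak{m}E_{p+1}$, hence cannot be a cycle, so its differential is a nonzero element of $\mathfrak{m}E_p$ and therefore involves a generator of $E_p$ of strictly smaller total degree, giving $\underline{e}_p<\underline{e}_{p+1}$. In particular $\underline{\mathbf{e}}=(\underline{e}_0,\dots,\underline{e}_c)$ is a genuine degree sequence, where $c=\mathrm{pd}(M)=\mathrm{codim}(M)$ by the Cohen-Macaulay hypothesis.

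The algorithm then proceeds by induction on the number of nonzero entries of the diagram. Given $\beta(M)\neq0$, I would take its top degree sequence $\mathbf{e}=\underline{\mathbf{e}}$, whose pure diagram $\beta(\mathbf{e})$ is supported exactly at the positions $(p,\underline{e}_p-p)$ with the positive rational values prescribed in Definition 3.2. Setting
$$c_{\mathbf{e}}=\min_{0\leq p\leq c}\frac{\beta_{p,\underline{e}_p-p}}{\dim K_{p,\underline{e}_p-p}(\beta(\mathbf{e}))},$$
the residual diagram $\beta'=\beta(M)-c_{\mathbf{e}}\beta(\mathbf{e})$ has nonnegative entries everywhere: away from the positions $(p,\underline{e}_p-p)$ it agrees with $\beta(M)$, and at those positions the subtraction is calibrated so that the worst one drops exactly to zero. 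Consequently the top degree sequence of $\beta'$ is strictly larger than $\mathbf{e}$ in the order $\preceq$, which both guarantees that the process terminates after finitely many steps and shows that the degree sequences produced form a chain.

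The genuine obstacle is to verify that $\beta'$ is again (up to a positive scalar) a legitimate Cohen-Macaulay Betti diagram, so that the induction may continue: mere positivity of its entries does \emph{not} suffice, since not every nonnegative diagram lies in the cone. Here I would invoke the supporting-hyperplane description of the cone from Eisenbud-Schreyer, namely that the cone of Betti diagrams is cut out by the nonnegative pairings $\langle\beta,\gamma(\mathcal{E})\rangle\geq0$ against the cohomology tables $\gamma(\mathcal{E})$ of vector bundles, equivalently against the numerical functionals dual to the facets of the Boij-S\"{o}derberg fan. Checking that these functionals stay nonnegative on $\beta'$ after each peeling step is precisely the content of Theorems 0.1 and 0.2 of \cite{ES09}, and it is the part that requires the existence of a pure resolution for every degree sequence, so that the pure diagrams are honest extremal rays and the Herzog-K\"{u}hl equations pin down the formula of Definition 3.2. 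Granting that structural input, the greedy algorithm terminates and yields the asserted decomposition $\beta(M)=\sum_{\mathbf{e}}c_{\mathbf{e}}\beta(\mathbf{e})$ with every $c_{\mathbf{e}}\geq0$; nonuniqueness reflects the fact that distinct chains of degree sequences can pass through the same diagram.
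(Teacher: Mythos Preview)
The paper does not actually prove this theorem: it is stated as a direct consequence of Theorems~0.1 and~0.2 of \cite{ES09}, with no further argument. Your proposal goes well beyond the paper by sketching the Boij--S\"{o}derberg greedy peeling algorithm and correctly isolating the nontrivial step---showing that the residual $\beta'$ remains in the cone after each subtraction---as exactly the content supplied by \cite{ES09}. Since both you and the paper ultimately defer to the same source for the substantive input, your approach is consistent with the paper's, only more explicit.

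One small remark on exposition: your justification that a minimal generator $g$ of $E_{p+1}$ ``cannot lie in $\mathfrak{m}E_{p+1}$, hence cannot be a cycle'' elides a step. The implication runs through exactness: if $\partial g=0$ then $g=\partial h$ for some $h\in E_{p+2}$, and minimality forces $\partial h\in\mathfrak{m}E_{p+1}$, contradicting that $g$ is a minimal generator. This is implicit in what you wrote but worth making visible. It does not affect the correctness of the overall outline.
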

Next we introduce the multiplicity of a module and extend it to the formal diagrams.
\begin{defn}Let $S$ be as above and $M$ a graded $S$-module. The Hilbert series\\ $HS_M(t)=\sum\limits_j\dim M_jt^j$ of $M$ can be uniquely represented as$$HS_M(t)=\frac{HN_M(t)}{(1-t)^{\dim M}}.$$
Define the multiplicity of $M$ to be $HN_M(1)$.
\end{defn}
From Corollary of \cite{HM84} and Theorem 1.2 of \cite{HM85}, we know that
\begin{thm}For a pure resolution
    $$0\to S(-d_n)^{b_n}\to\cdots\to S(-d_1)^{b_1}\to S\to R\to0,$$
having conditions $b_i=|\prod\limits_{j\not=i,j\geq1}\frac{d_i}{d_j-d_i}|$ for $1\leq i\leq n$, where $R$ is a graded $S$-module and in fact Cohen-Macaulay with the numerical conditions of $b_i$, the multiplicity of $R$ is $\mu(R)=\displaystyle\frac{\prod\limits_{i=1}^n d_i}{n!}$.
\end{thm}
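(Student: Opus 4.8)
The plan is to read the Hilbert series of $R$ directly off the pure resolution and then extract the multiplicity as the value of the reduced numerator at $t=1$. Since Hilbert series are additive on short exact sequences and $HS_{S(-d)}(t) = t^d/(1-t)^{r+1}$, the resolution gives $HS_R(t) = \frac{1}{(1-t)^{r+1}}\bigl(1 + \sum_{i=1}^n (-1)^i b_i t^{d_i}\bigr)$. Setting $d_0 = 0$ and $b_0 = 1$ and writing $P(t) = \sum_{i=0}^n (-1)^i b_i t^{d_i}$, this reads $HS_R(t) = P(t)/(1-t)^{r+1}$. Because $R$ is Cohen-Macaulay with a free resolution of length $n$, the Auslander-Buchsbaum formula gives $\operatorname{depth} R = \dim R$ and $\operatorname{pd} R = n$, hence $\dim R = r+1-n$; matching this against the definition of the multiplicity forces $HN_R(t) = P(t)/(1-t)^n$ and $\mu(R) = HN_R(1)$.

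The first substantive step is to show that $(1-t)^n$ actually divides $P(t)$, equivalently that $P(1) = P'(1) = \cdots = P^{(n-1)}(1) = 0$. Differentiating $t^{d_i}$ and using that falling factorials and ordinary powers span the same space, this is equivalent to the Herzog-Kühl equations $\sum_{i=0}^n (-1)^i b_i d_i^k = 0$ for $0 \leq k \leq n-1$. I would verify these for the prescribed $b_i$ by recognizing $(-1)^i b_i$ as a fixed scalar multiple of the divided-difference weights $c_i = \prod_{j\neq i}(d_i - d_j)^{-1}$ attached to the distinct nodes $d_0,\ldots,d_n$: the divided difference of a polynomial of degree $<n$ over $n+1$ nodes vanishes, which is precisely the system above. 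The normalization $b_0 = 1$ at the node $d_0 = 0$ then pins down the scalar.

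With the divisibility in hand, $\mu(R) = \lim_{t\to 1} P(t)/(1-t)^n$. Taylor-expanding $P$ about $t=1$ and using that the zero has order $n$, this limit equals $(-1)^n P^{(n)}(1)/n!$. Since the lower moments vanish, $P^{(n)}(1)$ collapses to the top moment $\sum_{i=0}^n (-1)^i b_i d_i^n$, which by the divided-difference identity (the $n$-th divided difference of $t^n$ is its leading coefficient $1$) equals exactly the scalar fixed in the previous step. Tracking the sign through the normalization, that scalar is $(-1)^n \prod_{i=1}^n d_i$, so $\mu(R) = (-1)^n\cdot(-1)^n \prod_{i=1}^n d_i / n! = \prod_{i=1}^n d_i / n!$, as claimed.

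The main obstacle is the middle step: proving cleanly that the prescribed Betti numbers satisfy all $n$ vanishing-moment conditions and then evaluating the one surviving top moment. I expect the divided-difference interpretation to be the cleanest route, since it packages all the identities at once and hands us the top moment for free; the alternative of manipulating the explicit products $\prod_{j\neq i}\frac{d_i}{d_j-d_i}$ term by term is bookkeeping-heavy and sign-sensitive, so I would avoid it. Once the moment identities are secured, everything else is a short limit computation.
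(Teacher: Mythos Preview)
The paper does not prove this theorem at all; it simply quotes it from the Huneke--Miller papers \cite{HM84,HM85}. Your proposal supplies an actual argument, and it is essentially the classical one: read the Hilbert series off the resolution, recognize the prescribed $(-1)^ib_i$ as a fixed scalar multiple of the Lagrange/divided-difference weights $c_i=\prod_{j\neq i}(d_i-d_j)^{-1}$ at the nodes $d_0,\dots,d_n$, conclude that the first $n$ power-sum moments vanish (hence $(1-t)^n\mid P(t)$), and then evaluate the surviving top moment via the identity that the $n$-th divided difference of $t^n$ equals $1$. This is exactly the Herzog--K\"uhl mechanism, so your route and the cited references agree.

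One genuine caution about the middle step. With the formula for $b_i$ literally as printed in the statement, namely $b_i=\bigl|\prod_{j\neq i,\,j\geq1}\frac{d_i}{d_j-d_i}\bigr|$, the ratio $b_i/|c_i|$ works out to $d_i^{\,n}$, which depends on $i$; so $(-1)^ib_i$ is \emph{not} a fixed scalar multiple of $c_i$, and already the $k=0$ Herzog--K\"uhl equation fails (try $n=2$, $d_1=1$, $d_2=2$). This is a typo in the paper: the intended formula, and the one you actually need for your argument, is $b_i=\prod_{j\neq i,\,j\geq1}\dfrac{d_j}{|d_j-d_i|}$. That version is what arises when one normalizes the pure-diagram Betti numbers of Definition~3.2 so that $b_0=1$ (indeed the computation in the proof of Proposition~3.6, done correctly, yields $\prod_{j\neq i,\,j\geq1}\frac{e_j}{|e_j-e_i|}$), and with it your divided-difference argument goes through verbatim and delivers $\mu(R)=\prod_{i=1}^n d_i/n!$.
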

Recall that the Hilbert functions of a module are totally determined linearly by the Betti numbers(Corollary 1.10 of \cite{ES05}). Therefore they can be generalized to formal diagrams. Furthermore, the multiplicity of a formal diagram can be defined. The multiplicity should be also totally determined linearly by the Betti numbers. So we can calculate the multiplicities of the pure diagrams defined in Definition 3.2.
\begin{prop}The multiplicities of the pure diagrams defined in Definition 3.2 are $1$.
\end{prop}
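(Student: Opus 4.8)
The plan is to reduce the computation for the formal diagram $\beta(\mathbf{e})$ to the honest multiplicity formula of Theorem 3.5 by exploiting two structural features of the multiplicity, both of which I would record first. The first is linearity: as observed just before the statement, the map sending a Betti diagram to its multiplicity $HN(1)$ is a linear functional on the space $\mathbb{B}$ of rational Betti diagrams, so it is enough to compare $\beta(\mathbf{e})$ with a scalar multiple of a realizable diagram. The second is invariance under an overall grading shift: twisting a module by $S(-a)$ multiplies its Hilbert series $HS(t)$, and hence its numerator $HN(t)$, by $t^{a}$, leaving the value $HN(1)$ unchanged; thus multiplicity is insensitive to the choice of $e_0$ and I may freely translate the degree sequence.

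Next I would argue that $\beta(\mathbf{e})$ is a positive scalar multiple of an actual pure-resolution diagram with the same degree sequence. Fix $\mathbf{e}=(e_0,\dots,e_n)$ and set $d_i=e_i-e_0$, so that $0=d_0<d_1<\cdots<d_n$. The Cohen--Macaulay (codimension $n$) condition forces the Hilbert numerator $\sum_p(-1)^p\beta_p t^{e_p}$ to vanish to order $n$ at $t=1$; these are $n$ independent linear (Herzog--K\"uhl) conditions on the $n+1$ entries $\beta_0,\dots,\beta_n$, so for a fixed degree sequence they determine the whole column of Betti numbers up to one positive scalar. Both the diagram of Definition 3.2 and the resolution of Theorem 3.5, twisted by $e_0$ so as to have degree sequence $(e_i)$, satisfy these conditions, hence are proportional, and the proportionality constant can be read off from the degree-$0$ entries. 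The module $R$ of Theorem 3.5 has $b_0=1$, while Definition 3.2 gives $\beta_0=n!\prod_{i\neq 0}\frac{1}{|e_i-e_0|}=\frac{n!}{\prod_{i=1}^n d_i}$, so $\beta(\mathbf{e})=\beta_0\cdot\beta\big(R(-e_0)\big)$, where $R$ is the module of Theorem 3.5 attached to the sequence $(d_i)$.

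Finally I would combine these inputs. By linearity and shift-invariance, $\mu(\beta(\mathbf{e}))=\beta_0\cdot\mu\big(R(-e_0)\big)=\beta_0\cdot\mu(R)$, and Theorem 3.5 gives $\mu(R)=\frac{1}{n!}\prod_{i=1}^n d_i$. Substituting, the two normalizing factors cancel, $\mu(\beta(\mathbf{e}))=\frac{n!}{\prod_{i=1}^n d_i}\cdot\frac{\prod_{i=1}^n d_i}{n!}=1$, which is the assertion. The step I expect to be the main obstacle is the middle one: rigorously justifying that the \emph{formal} multiplicity of the a priori non-realizable diagram $\beta(\mathbf{e})$ really equals $\beta_0$ times the genuine multiplicity of $R(-e_0)$. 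This rests on (i) uniqueness up to a scalar of the solution of the Herzog--K\"uhl equations for a fixed degree sequence, and (ii) the fact that the functional ``multiplicity,'' defined on the diagrams of actual modules, extends linearly to all of $\mathbb{B}$ compatibly with both scaling and twisting.

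As a self-contained alternative that avoids Theorem 3.5 entirely, one can compute directly from $\mu(\beta(\mathbf{e}))=\lim_{t\to1}(1-t)^{-n}\sum_p(-1)^p\beta_p t^{e_p}$. Since the numerator vanishes to order $n$ at $t=1$, $n$-fold differentiation yields $\mu(\beta(\mathbf{e}))=\frac{(-1)^n}{n!}\sum_p(-1)^p\beta_p\,e_p(e_p-1)\cdots(e_p-n+1)$. Writing $\beta_p=n!\prod_{i\neq p}\frac{1}{|e_i-e_p|}$ and using that $\prod_{i\neq p}(e_p-e_i)$ has sign $(-1)^{n-p}$, this sum is exactly the $n$-th divided difference of the monic polynomial $f(x)=x(x-1)\cdots(x-n+1)$ on the nodes $e_0,\dots,e_n$, which equals the leading coefficient of $f$, namely $1$; so $\mu(\beta(\mathbf{e}))=1$ again.
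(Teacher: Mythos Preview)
Your first argument is essentially the paper's: normalize $\beta(\mathbf{e})$ by its $(0,0)$ entry, identify the result with the Betti table of the module $R$ in Theorem~3.5, and conclude by linearity that $\mu(\beta(\mathbf{e}))=\beta_0\cdot\mu(R)=1$. The only differences are cosmetic. The paper verifies the identification by directly computing the ratio $\dim K_{i,e_i-i}/\dim K_{0,0}$ and matching it to the formula $b_i=\bigl|\prod_{j\neq i,j\geq1}d_j/(d_j-d_i)\bigr|$ of Theorem~3.5, whereas you reach the same conclusion abstractly via uniqueness of solutions to the Herzog--K\"uhl equations; the paper's direct check is slightly shorter and does not need to know in advance that $\beta(\mathbf{e})$ satisfies those equations. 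You also make explicit the shift $d_i=e_i-e_0$ and the twist-invariance of multiplicity to cover the case $e_0\neq0$, which the paper's computation $\prod_{j\neq0}\tfrac{1}{|e_j|}$ leaves implicit.

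Your second, divided-difference computation is a genuinely different route that bypasses Theorem~3.5 entirely. It is correct: with $\beta_p=n!\prod_{i\neq p}|e_i-e_p|^{-1}$ and $\prod_{i\neq p}(e_p-e_i)=(-1)^{n-p}\prod_{i\neq p}|e_i-e_p|$, one gets
\[
\mu(\beta(\mathbf{e}))=\sum_{p=0}^n\frac{f(e_p)}{\prod_{i\neq p}(e_p-e_i)}=f[e_0,\dots,e_n]
\]
for $f(x)=x(x-1)\cdots(x-n+1)$, and the $n$-th divided difference of a monic degree-$n$ polynomial is~$1$. This is more self-contained than both the paper's proof and your first approach, at the cost of a small calculation; it also makes transparent why the particular normalization in Definition~3.2 was chosen.
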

\begin{proof}We first check that when normalized such that $\dim K_{0,0}=1$, the Betti numbers of the pure diagrams satisfy the conditions in Theorem 3.5. In fact, we have $$\frac{\dim K_{i,e_i-i}}{\dim K_{0,0}}=\frac{\prod\limits_{j\not=i,j\geq0}\frac{1}{|e_j-e_i|}}{\prod\limits_{j\not=0}\frac{1}{|e_j|}}=|\prod\limits_{j\not=i,j\geq1}\frac{e_i}{e_j-e_i}|.$$
Since the multiplicity is totally determined by the Betti numbers, and the Betti numbers of the pure diagrams divided by $\dim K_{0,0}$ satisfy the conditions in Theorem 3.5, the result of Theorem 3.5 is formally generalized. Then the multiplicity of $\displaystyle\frac{\beta(\mathbf{e})}{\dim K_{0,0}}$ is $\displaystyle\frac{\prod\limits_{i=1}^ne_i}{n!}$. As a result, the multiplicity of $\beta(\mathbf{e})$ is ${\dim K_{0,0}}\cdot\displaystyle\frac{\prod\limits_{i=1}^ne_i}{n!}=1$.
\end{proof}
\section{\label{S3}The complete Betti diagram of $\Sigma_k$}
We first show that the multiplicity of a projective variety coincides with its degree.
\begin{prop}Let $V\subset\mathbb{P}^r$ be a projective variety. Let $S=\mathbb{C}[x_0,\cdots,x_r]$ and $I$ be the vanishing ideal of $V$. Then the multiplicity of $S/I$ coincides with the degree of $V$. Here the degree means the number of points in the intersections $V\cap H_1\cap\cdots\cap H_{\dim V}$, where $H_i$'s are general hyperplanes in $\mathbb{P}^r$.
\end{prop}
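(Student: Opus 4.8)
The plan is to reduce everything to the Hilbert polynomial and then identify its leading coefficient with $\deg V$ via successive general hyperplane sections. First I would unwind the definition of multiplicity given in Definition 3.4. Writing $d=\dim(S/I)$ for the Krull dimension and $HN_{S/I}(t)=\sum_i a_i t^i$, the expansion
$$\frac{1}{(1-t)^{d}}=\sum_{m\geq 0}\binom{m+d-1}{d-1}t^m$$
shows that for $m\gg0$ the Hilbert function satisfies
$$\dim_{\mathbb{C}}(S/I)_m=\sum_i a_i\binom{m-i+d-1}{d-1},$$
which is a polynomial in $m$ of degree $d-1$ whose leading coefficient equals $\frac{1}{(d-1)!}\sum_i a_i=\frac{HN_{S/I}(1)}{(d-1)!}$. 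Since $V$ is irreducible of dimension $\dim V$ we have $d=\dim V+1$, so the leading coefficient of the Hilbert polynomial $P_V$ of $S/I$ is $\frac{HN_{S/I}(1)}{(\dim V)!}$; thus it remains to prove that this leading coefficient is $\frac{\deg V}{(\dim V)!}$.

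For that I would argue by induction on $\dim V$ using a general hyperplane $H=\{\ell=0\}$. Because $S/I$ is a domain, a general linear form $\ell$ is a non-zero-divisor, so the exact sequence
$$0\to (S/I)(-1)\xrightarrow{\,\cdot\ell\,} S/I\to S/(I+\ell S)\to 0$$
gives $\dim_{\mathbb{C}}(S/(I+\ell S))_m=\dim_{\mathbb{C}}(S/I)_m-\dim_{\mathbb{C}}(S/I)_{m-1}$ for all $m$; on the level of Hilbert polynomials this is the finite difference $P_{V\cap H}(m)=P_V(m)-P_V(m-1)$, which lowers the degree by one and multiplies the leading coefficient by $\dim V$. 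By Bertini in characteristic $0$ the successive sections $V\cap H_1\cap\cdots\cap H_j$ remain reduced of dimension $\dim V-j$, so after $\dim V$ steps we reach a reduced zero-dimensional scheme consisting of exactly $\deg V$ points. Its Hilbert function stabilizes to the number of points $\deg V$, i.e. $P_{V\cap H_1\cap\cdots\cap H_{\dim V}}$ is the constant $\deg V$. Taking $\dim V$ iterated finite differences of $P_V$ therefore yields $(\dim V)!\cdot(\text{leading coeff of }P_V)=\deg V$, so the leading coefficient is $\frac{\deg V}{(\dim V)!}$ as wanted; combining with the first paragraph gives $HN_{S/I}(1)=\deg V$.

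The step I expect to be the main obstacle is the general-position input rather than the bookkeeping: one must ensure that a general $\ell$ is genuinely a non-zero-divisor on $S/I$ and that each iterated hyperplane section stays reduced and of the expected dimension, so that the final zero-dimensional scheme really has $\deg V$ reduced points matching the intersection-theoretic definition in the statement. Over $\mathbb{C}$ this is supplied by Bertini's theorem and the fact that the associated primes of the domain $S/I$ are avoided by a general linear form, but making the reduction to the zero-dimensional case clean, and checking that the stabilized Hilbert value of a reduced finite scheme equals its number of points, is where the real content lies.
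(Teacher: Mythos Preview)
Your proof is correct and follows essentially the same route as the paper: both arguments compute the multiplicity by taking successive general hyperplane sections, using that each section replaces the Hilbert function by its finite difference and that after $\dim V$ steps one is left with the constant $\deg V$. You phrase this via the leading coefficient of the Hilbert polynomial and the short exact sequence for multiplication by $\ell$, whereas the paper works directly with the Hilbert series and the operator $(1-t)$, but the underlying idea is identical.
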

\begin{proof}The Hilbert series $HS_V(t)$ of $V$ is of the form  $HS_V(t)=\sum\limits_jH_V(j)t^j$, where $H_V(t)$ is the Hilbert polynomial of $V$. Then $tHS_V(t)=\sum\limits_jH_V(j)t^{j+1}$. Taking the difference of these two equations, we get $(1-t)HS_V(t)\cong\sum\limits_j[H_V(j)-H_V(j-1)]t^j=\sum\limits_jH_{V\cap H_1}(j)t^j$. Here the symbol $\cong$ means that they are equal from a term of sufficiently large degree. Inductively, we get $(1-t)^{\dim V}HS_V(t)=\sum\limits_{j=0}^Nc_jt^j+\sum\limits_{j=N+1}^\infty Dt^j$, where $D=\deg(V)$. Multiplied by $(1-t)$ on both sides, it is deduced that $$HS_V(t)=\frac{(1-t)P(t)+Dt^{N+1}}{(1-t)^{\dim V+1}}=\frac{(1-t)P(t)+Dt^{N+1}}{(1-t)^{\dim S/I}}.$$
By the definition of the multiplicity, we see that it is equal to the degree.
\end{proof}
We fix some notations. Observe that for the Betti diagram of $\Sigma_k$, $\dim K_{1,k+1}\not=0$ and the lower right corner is $(r-2k-1,2k+2)$. These mean that the sequence $\mathbf{e}$ that characterizes the diagram $\beta(\mathbf{e})$ composing the Betti diagram of $\Sigma_k$ has the first non-zero entry starting from $k+2$, and the last entry at most $r+1$.

Let $\mathbf{i}=(i_0,\cdots,i_k)$ with $0\leq i_0\leq i_1\leq\cdots\leq i_k$. Let $\pi_k(\mathbf{i};d)$ be the pure diagram associated to the sequence
$$\{0,1,\cdots,r+1\}\setminus\{1,\cdots,k+1,r+1-(i_k+k),\cdots,r+1-i_0\},$$
where $r=d-2$.

From Theorem 1.3 of \cite{SV09} we know that the linear strand of weight $k+1$ of $\Sigma_k$ has length at least $r-2k-3$. This implies that $k+1+r-2k-3<r+1-(i_k+k)\Rightarrow i_k\leq2$.

Write $\overline{\beta}(\Sigma_k)=\beta(\Sigma_k)/\deg(\Sigma_k)$. Then the diagram $\overline{\beta}(\Sigma_k)$ has multiplicity $1$. It was shown in \cite{TL} that
\begin{lem}There is a decomposition of the diagram $\overline{\beta}(\Sigma_k)=\sum\limits_{\mathbf{i}|i_k\leq2}c_{\mathbf{i};d}\pi_k(\mathbf{i};d)$ with ${\sum\limits_{\mathbf{i}|i_k\leq2}c_{\mathbf{i};d}=1}$ and $0\leq c_{\mathbf{i};d}\leq1$.
\end{lem}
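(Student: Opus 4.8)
The plan is to read the decomposition off Boij--S\"{o}derberg theory and to recover both numerical assertions from the multiplicity computations recalled above. Since $\Sigma_k$ is arithmetically Cohen--Macaulay by \cite{ENP}, its homogeneous coordinate ring $S/I_{\Sigma_k}$ is a Cohen--Macaulay graded $S$-module, so Theorem 3.3 gives a decomposition $\beta(\Sigma_k)=\sum_{\mathbf{e}}c_{\mathbf{e}}\beta(\mathbf{e})$ with every $c_{\mathbf{e}}\geq 0$; dividing by $\deg\Sigma_k$ yields the same decomposition for $\overline{\beta}(\Sigma_k)$. Because each pure diagram and each coefficient is non-negative, no cancellation is possible, so every degree sequence $\mathbf{e}$ with $c_{\mathbf{e}}>0$ has support contained in that of $\overline{\beta}(\Sigma_k)$.

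First I would fix the shape of each occurring $\mathbf{e}$. As $S/I_{\Sigma_k}$ is Cohen--Macaulay of codimension $r-2k-1$, its projective dimension is $r-2k-1$, so every $\mathbf{e}$ has exactly $r-2k$ entries $e_0<\cdots<e_{r-2k-1}$. The only nonzero entry in column $0$ is $K_{0,0}=1$, which forces $e_0=0$, while Danila's vanishing (\cite{GD}) places the entry of every column $p\geq 1$ in row $\geq k+1$, so that $e_1\geq k+2$ and the values $1,\dots,k+1$ are omitted. Recording the remaining $k+1$ omitted values as $r+1-(i_j+j)$ with $0\leq i_0\leq\cdots\leq i_k$ is precisely the statement that $\mathbf{e}$ is the degree sequence of some $\pi_k(\mathbf{i};d)$. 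The bound $i_k\leq 2$ is the inequality displayed just before the statement: by \cite{SV09} the weight-$(k+1)$ strand reaches column $r-2k-3$, so $r-k-2$ lies in the initial consecutive run and hence strictly below the smallest omitted value $r+1-(i_k+k)$, giving $r-k-2<r+1-(i_k+k)$ and so $i_k\leq 2$. A sequence with $i_k\geq 3$ would instead carry a nonzero entry in the range $i\geq k+2$, $j\leq r-2k-3$ killed by the property $N_{k+2,r-2k-3}$ of \cite{ENP}, so it cannot appear.

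The equality $\sum_{\mathbf{i}}c_{\mathbf{i};d}=1$ then comes for free from multiplicity. By Proposition 4.1 the multiplicity of $S/I_{\Sigma_k}$ equals $\deg\Sigma_k$, so $\overline{\beta}(\Sigma_k)$ has multiplicity $1$, and by Proposition 3.6 each $\pi_k(\mathbf{i};d)$ has multiplicity $1$. Since multiplicity is a linear functional on Betti diagrams (\cite{ES05}), evaluating it on the decomposition gives $1=\sum_{\mathbf{i}}c_{\mathbf{i};d}$. The coefficients are non-negative and sum to $1$, so automatically $0\leq c_{\mathbf{i};d}\leq 1$ for every $\mathbf{i}$.

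The step I expect to be delicate is the second one: confirming that the vanishing results already available are strong enough to force every occurring $\mathbf{e}$ into the family $\pi_k(\mathbf{i};d)$. In particular one must check that the initial run is genuinely consecutive, so that $e_p=p+k+1$ holds throughout the strand and no intermediate value is omitted, and that no sequence can place an entry in an a priori uncontrolled position with $1\leq i\leq k+1$ and $k+2\leq j\leq 2k$. The safeguards are Danila's theorem below the strand, $N_{k+2,r-2k-3}$ to the right of column $k+1$, and the one-entry-per-column rigidity of pure diagrams together with the fixed Cohen--Macaulay length; the work is to verify that these leave no admissible sequence outside the parametrization.
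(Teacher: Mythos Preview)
Your reconstruction is correct and matches the paper's approach: the paper does not give a self-contained proof of this lemma but attributes it to \cite{TL}, having already set up in the preceding paragraphs exactly the ingredients you invoke (the Boij--S\"{o}derberg decomposition of Theorem~3.3, the parametrization of the admissible degree sequences by $\mathbf{i}$, the bound $i_k\leq 2$ from \cite{SV09}, and the multiplicity-$1$ normalization via Propositions~3.6 and~4.1).

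The concern you raise in your final paragraph is in fact not an obstacle. Once Danila's theorem and $N_{k+2,r-2k-3}$ pin down $e_0=0$ and $e_p=p+k+1$ for $1\leq p\leq r-2k-3$, the remaining two entries $e_{r-2k-2}<e_{r-2k-1}$ are only constrained by $e_{r-2k-2}>r-k-2$ (strict increase) and $e_{r-2k-1}\leq r+1$ (regularity $2k+2$, which is part of what \cite{ENP} establishes and is recorded in the paper as ``the lower right corner is $(r-2k-1,2k+2)$''). But \emph{every} ordered pair from $\{r-k-1,\dots,r+1\}$ already arises as the tail of some $\pi_k(\mathbf{i};d)$ with $i_k\leq 2$: the map $\mathbf{i}\mapsto\{r+1-(i_j+j):0\leq j\leq k\}$ is a bijection between weakly increasing $(k{+}1)$-tuples in $\{0,1,2\}$ and $(k{+}1)$-element subsets of $\{r-k-1,\dots,r+1\}$, both sets having cardinality $\binom{k+3}{2}$. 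So you do not need any extra vanishing in rows $k+2,\dots,2k$ of the last two columns; the parametrization is already exhaustive.
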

Now we can prove our main theorem:
\begin{thm}Assume that $k\geq1$. Then for the secant variety $\Sigma_k$ of a genus $2$ curve, we have:\\
(1)$\dim K_{r-2k-2,2k+1}(\Sigma_k,\mathcal{O}_{\Sigma_k}(1))=r-k-2$;\\
(2)$\dim K_{i,k+1}(\Sigma_k,\mathcal{O}_{\Sigma_k}(1))\not=0\iff 1\leq i\leq r-2k-3$.
\end{thm}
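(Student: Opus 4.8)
The plan is to feed the Cohen--Macaulay module $S/I_{\Sigma_k}$ into the Boij--S\"oderberg machine of Lemma 4.4, so that $\overline{\beta}(\Sigma_k)=\sum_{\mathbf{i}\,|\,i_k\le 2}c_{\mathbf{i};d}\,\pi_k(\mathbf{i};d)$ is a convex combination, and to pin down exactly which degree sequences occur. First I would record the shape of each $\pi_k(\mathbf{i};d)$ from its degree sequence $\{0\}\cup\{k+2,\dots,r+1\}\setminus\{r+1-(i_j+j):0\le j\le k\}$: since $i_j+j$ is strictly increasing, the removed top set consists of $k+1$ distinct degrees lying in $[\,r-k-1,\,r+1\,]$, so the ``middle run'' $k+2,\dots,r-(i_k+k)$ survives untouched and fills row $k+1$ in columns $1,\dots,r-i_k-2k-1$. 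A short bookkeeping with the identity (row of a kept degree $e$)$=k+1+\#\{j:r+1-(i_j+j)<e\}$ then shows that row $k+1+s$ of $\pi_k(\mathbf{i};d)$ carries exactly $i_{k-s+1}-i_{k-s}$ entries for $1\le s\le k$, while row $2k+2$ carries $i_0$ entries. This dictionary between $\mathbf{i}$ and the Betti shape is the technical backbone of everything that follows.

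Next I would cut down the index set. The bound of \cite{SV09} already gives $i_k\le 2$. The vanishing of rows $k+2,\dots,2k$ (from \cite{ENP}), read through the dictionary above, forces $i_1=\cdots=i_k=:a$ with $i_0=:b\le a$; Theorem 2.2, i.e.\ $K_{r-2k-1,2k+1}=0$, forbids the top degree from being $r$, which excludes the cases $b=0<a$. The survivors are therefore $\mathbf{i}\in\{(0,\dots,0),(1,\dots,1),(1,2,\dots,2),(2,\dots,2)\}$. Corollary 2.3, $K_{r-2k-2,2k+2}=r-2k-1\neq0$, then forces $c_{(2,\dots,2);d}>0$, since $(2,\dots,2)$ is the unique survivor placing a generator in row $2k+2$ at column $r-2k-2$ (equivalently, the only one with $i_0=2$).

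The crux is to kill the two ``short-tail'' sequences $(0,\dots,0)$ and $(1,\dots,1)$; geometrically this is an \emph{upper} bound on the length of the weight-$(k+1)$ strand, whereas \cite{SV09} supplies only a lower bound, so genuinely new input is required. Both offending diagrams are detected by the leftover tail positions $K_{r-2k-1,k+1}$ and $K_{r-2k-2,k+1}$, and the duality of Proposition 2.1 identifies these with $K_{0,k+1}(\Sigma_k,K_{\Sigma_k})^\vee$ and $K_{1,k+1}(\Sigma_k,K_{\Sigma_k})^\vee$. The first I would kill by extending Theorem 2.2 from $\mathcal{O}_{\Sigma_k}(1)$ to higher twists: using $H^0(K_{\Sigma_k}(m))=H^0(C,K+mL)\otimes S^kH^0(C,K)$, the surjectivity of $H^0(\mathcal{O}_{\Sigma_k}(1))\otimes H^0(K_{\Sigma_k}(m))\to H^0(K_{\Sigma_k}(m+1))$ reduces to that of $H^0(L)\otimes H^0(K+mL)\to H^0(K+(m+1)L)$, which holds for $m\ge 1$, so $\omega_{\Sigma_k}$ is generated in degree $0$ and $(0,\dots,0)$ drops out. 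The remaining, and hardest, point is the linear-syzygy vanishing $K_{1,k+1}(\Sigma_k,K_{\Sigma_k})=0$; here I would either analyse that middle Koszul differential directly through the cohomology of $K_{\Sigma_k}(m)$, or --- once $(0,\dots,0)$ is gone --- observe that the three numerical constraints $\sum c_{\mathbf{i};d}=1$, Corollary 2.3, and $K_{r-2k-1,2k+2}=k+2$ become three equations in the three coefficients once $\deg\Sigma_k$ is inserted, and I expect them to force $c_{(1,\dots,1);d}=0$ outright, bypassing the syzygy computation. I view this elimination as the main obstacle.

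Finally, with only $\pi_k(1,2,\dots,2;d)$ and $\pi_k(2,\dots,2;d)$ surviving, everything is determined. Corollary 2.3 fixes $c_{(2,\dots,2);d}$, the normalization $\sum c=1$ fixes $c_{(1,2,\dots,2);d}$, and $K_{r-2k-1,2k+2}=k+2$ is the consistency check: comparing the ratios of the two corner Betti numbers shows a single pure diagram cannot meet both corner values at once, so $\pi_k(1,2,\dots,2;d)$ must genuinely occur. For part (2), both survivors carry the full middle run, giving $K_{i,k+1}\neq0$ for $1\le i\le r-2k-3$, while neither carries row $k+1$ beyond column $r-2k-3$, giving the vanishing for $i\ge r-2k-2$ (with $K_{0,k+1}=0$ trivially). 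For part (1), the position $(r-2k-2,2k+1)$ receives a contribution only from $\pi_k(1,2,\dots,2;d)$, so $K_{r-2k-2,2k+1}=\deg(\Sigma_k)\,c_{(1,2,\dots,2);d}\,\dim K_{r-2k-2,2k+1}\bigl(\pi_k(1,2,\dots,2;d)\bigr)$, and evaluating the pure Betti number from Definition 3.2 yields $r-k-2$.
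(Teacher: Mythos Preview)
Your approach diverges from the paper's, and there is a real gap in your pruning step. You want the vanishing of rows $k+2,\dots,2k$ to force $i_1=\cdots=i_k$; but once $i_k\le 2$, every entry of $\pi_k(\mathbf{i};d)$ in rows $k+2,\dots,2k+2$ sits in one of the last two columns $r-2k-2,\,r-2k-1$ (your own dictionary shows the middle run already occupies columns $1,\dots,r-i_k-2k-1\ge r-2k-3$). The property $N_{k+2,r-2k-3}$ from \cite{ENP} only gives vanishing in columns $1,\dots,r-2k-3$, so it says nothing about those positions, and your reduction to the four candidates $(0,\dots,0),(1,\dots,1),(1,2,\dots,2),(2,\dots,2)$ is not justified as stated. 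Even granting it, step 5 (killing $(0,\dots,0)$) needs an identification of $H^0(K_{\Sigma_k}(m))$ for $m\ge 2$, which is not the $m=0,1$ case handled in \S2, and you yourself flag step 6 as the ``main obstacle''.

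The paper sidesteps all of this with a single inequality. It uses only the two corner numbers $K_{r-2k-1,2k+2}=k+2$ and $K_{r-2k-2,2k+2}=r-2k-1$, the degree formula for $\Sigma_k$, and $\sum_{\mathbf{i}}c_{\mathbf{i};d}=1$. From $K_{r-2k-1,2k+2}$ one gets
\[
(k+2)! \;=\; \sum_{i_0>0}\frac{r^2+r-2k-2}{(r+1)(r-k-1)}\,c_{\mathbf{i};d}\prod_{j=0}^{k}(i_j+j),
\]
and $K_{r-2k-2,2k+2}$ pins down $c_{(2,\dots,2);d}$. Substituting, bounding $\prod_j(i_j+j)\le (k+2)!/2$ for every $\mathbf{i}$ with $i_0=1$, and bounding $\sum_{i_0=1}c_{\mathbf{i};d}\le 1-c_{(2,\dots,2);d}$, one obtains a chain of inequalities that collapses to an equality. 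Equality in the first bound forces $i_j=2$ for $j\ge 1$ whenever $i_0=1$ and $c_{\mathbf{i};d}>0$; equality in the second forces $\sum_{i_0=0}c_{\mathbf{i};d}=0$. Thus $(1,2,\dots,2)$ and $(2,\dots,2)$ are the only survivors, with no preliminary pruning, no middle-row vanishing, and no extra surjectivity for higher twists. Your numerical alternative in step 6 is in the right spirit, but the point is that this inequality handles the full index set $\{\mathbf{i}:i_k\le 2\}$ at once.
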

\begin{proof}Consider $K_{r-2k-1,2k+2}$. For a pure diagram characterised by $\mathbf{e}$ contributing to $K_{r-2k-1,2k+2}$, we have $e_{r-2k-1}=r-2k-1+2k+2=r+1$. This implies that only those $\mathbf{i}$ with $i_0>0$ contribute to $K_{r-2k-1,2k+2}$. Applying Lemma 4.2 to $K_{r-2k-1,2k+2}$, we have
\begin{equation}
  (k+2)/\deg(\Sigma_k)=\sum\limits_{i_0>0}c_{\mathbf{i};d}\dim K_{r-2k-1,2k+2}(\pi_k(\mathbf{i};d)).\label{X4}
\end{equation}
By the definition, $\dim K_{r-2k-1,2k+2}(\pi_k(\mathbf{i};d))=\displaystyle\frac{(r-2k-1)!}{(r+1)\prod\limits_{j\not=r-2k-1}|r+1-e_j|}$. \\
Since we have
\begin{eqnarray*}
\prod\limits_{j\not=r-2k-1}|r+1-e_j|\cdot r\cdots(r-k)\prod\limits_{j=0}^k(i_j+j)&=&r!\\
\prod\limits_{j\not=r-2k-1}|r+1-e_j|&=&\frac{(r-k-1)!}{\prod\limits_{j=0}^k(i_j+j)},
\end{eqnarray*}
it is deduced that $\dim K_{r-2k-1,2k+2}(\pi_k(\mathbf{i};d))=\displaystyle\frac{\prod\limits_{j=0}^k(i_j+j)}{(r+1)\prod\limits_{j=r-2k}^{r-k-1}j}$.\\
It can be deduced from Proposition 5.10 of \cite{ENP} that, when $k\geq1$, we have
\begin{eqnarray*}
\deg(\Sigma_k)&=&\binom{r-k}{k+1}+2\binom{r-k-1}{k}+\binom{r-k-2}{k-1}\\
&=&\frac{(r^2+r-2k-2)\prod\limits_{j=r-2k}^{r-k-2}j}{(k+1)!}.
\end{eqnarray*}
Therefore the equation \eqref{X4} becomes
\begin{eqnarray*}
k+2&=&\sum\limits_{i_0>0}\frac{(r^2+r-2k-2)\prod\limits_{j=r-2k}^{r-k-2}j}{(k+1)!}\cdot c_{\mathbf{i};d}\cdot\frac{\prod\limits_{j=0}^k(i_j+j)}{(r+1)\prod\limits_{j=r-2k}^{r-k-1}j}
\end{eqnarray*}
\begin{align}
(k+2)!&=\sum\limits_{i_0>0}\frac{r^2+r-2k-2}{(r+1)(r-k-1)}c_{\mathbf{i};d}\prod\limits_{j=0}^k(i_j+j).\label{XX}
\end{align}
Consider $K_{r-2k-2,2k+2}$. For the pure diagrams characterised by $\mathbf{e}$ contributing to $K_{r-2k-2,2k+2}$, we have $e_{r-2k-2}=r-2k-2+2k+2=r$, which implies that $e_{r-2k-1}=r+1\Rightarrow i_0=2$. So only $\pi_k((2,\cdots,2);d)$ contributes to $K_{r-2k-2,2k+2}$, meaning that
$$\frac{r-2k-1}{\deg(\Sigma_k)}=c_{(2,\cdots,2);d}\dim K_{r-2k-2,2k+2}(\pi_k(2,\cdots,2);d).$$
By the definition,
$\dim K_{r-2k-2,2k+2}(\pi_k(2,\cdots,2);d)=\displaystyle\frac{(r-2k-1)!}{r\prod\limits_{e_p\not=r}|r-e_p|}$.\\
Since we have
\begin{eqnarray*}
\prod\limits_{e_p\not=r}|r-e_p|\cdot(r-1)\cdots(r-k-1)\prod\limits_{j=0}^k|j+1|&=&(r-1)!\\
\prod\limits_{e_p\not=r}|r-e_p|&=&\frac{(r-k-2)!}{(k+1)!}
\end{eqnarray*}
we know that $\dim K_{r-2k-2,2k+2}=\displaystyle\frac{(k+1)!}{r\prod\limits_{j=r-2k}^{r-k-2}j}.$\\
As a result,
\begin{eqnarray*}
c_{(2,\cdots,2);d}&=&\frac{r-2k-1}{\deg(\Sigma_k)\dim K_{r-2k-2,2k+2}}\\
&=&(r-2k-1)\cdot\frac{(k+1)!}{(r^2+r-2k-2)\prod\limits_{j=r-2k}^{r-k-2}j}\cdot\frac{r\prod\limits_{j=r-2k}^{r-k-2}j}{(k+1)!}\\
&=&\frac{r^2-2kr-r}{r^2+r-2k-2}.
\end{eqnarray*}
We now plug the value of $c_{(2,\cdots,2);d}$ in the equation \eqref{XX}:
\begin{eqnarray*}
(k+2)!&=&\sum\limits_{i_0>0}\frac{r^2+r-2k-2}{(r+1)(r-k-1)}c_{\mathbf{i};d}\prod\limits_{j=0}^k(i_j+j)\\
(k+2)!&=&\frac{r^2+r-2k-2}{(r+1)(r-k-1)}\frac{r^2-2kr-r}{r^2+r-2k-2}(k+2)!+\\
&&\sum\limits_{i_0=1}\frac{r^2+r-2k-2}{(r+1)(r-k-1)}c_{\mathbf{i};d}\prod\limits_{j=0}^k(i_j+j)
\end{eqnarray*}
Moving the term with $(k+2)!$ to the left hand side and canceling the factor $r^2+r-2k-2$, we get
\begin{align}
(k+2)![1-\frac{r^2-2kr-r}{(r+1)(r-k-1)}]=\sum\limits_{i_0=1}\frac{r^2+r-2k-2}{(r+1)(r-k-1)}c_{\mathbf{i};d}\prod\limits_{j=0}^k(i_j+j)\label{XXX}
\end{align}
For the right hand side, note that for each $\mathbf{i}$, we have $i_0=1$ and $i_j\leq2$ for $1\leq j\leq k$. Therefore $$\prod\limits_{j=0}^k(i_j+j)\leq(1+0)(2+1)\cdots(2+k)=\frac{(k+2)!}{2}.$$
Applying this inequality to \eqref{XXX}, we know that
\begin{eqnarray*}
(k+2)![1-\frac{r^2-2kr-r}{(r+1)(r-k-1)}]&\leq&\sum\limits_{i_0=1}\frac{r^2+r-2k-2}{(r+1)(r-k-1)}c_{\mathbf{i};d}\frac{(k+2)!}{2}\\
(k+2)!(rk+r-k-1)&\leq&\sum\limits_{i_0=1}(r^2+r-2k-2)c_{\mathbf{i};d}\frac{(k+2)!}{2}\\
&=&\frac{(k+2)!}{2}(r^2+r-2k-2)\sum\limits_{i_0=1}c_{\mathbf{i};d}\\
&\leq&\frac{(k+2)!}{2}(r^2+r-2k-2)(1-c_{(2,\cdots,2);d})\\
&=&\frac{(k+2)!}{2}(2kr+2r-2k-2).
\end{eqnarray*}
The inequalities are actually equalities! This implies, firstly, only the coefficient of $(1,2,\cdots,2)$ is non-zero among $\mathbf{i}$ with $i_0=1$, and secondly, $\sum\limits_{i_0=1}c_{\mathbf{i};d}=c_{(1,2,\cdots,2);d}$ is exactly $1-c_{(2,\cdots,2);d}$. In other words, the Betti diagram of $\Sigma_k$ is decomposed into the combination of 2 pure diagrams, which are represented by $(2,\cdots,2)$ and $(1,2,\cdots,2)$.

The degree sequence associated to $(2,\cdots,2)$ is $(0,k+2,\cdots,r-k-2,r,r+1)$ and that associated to $(1,2,\cdots,2)$ is $(0,k+2,\cdots,r-k-2,r-1,r+1)$. The pure diagrams contributing to $K_{r-2k-2,2k+1}$ must satisfy $e_{r-2k-2}=r-1$. So only $(1,2,\cdots,2)$ contributes to this term. Then we have
\begin{eqnarray*}
\dim K_{r-2k-2,2k+1}&=&\deg(\Sigma_k)\cdot c_{(1,2,\cdots,2)}\cdot\frac{(r-2k-1)!}{(r-1)(\prod\limits^{r-k-3}_{j=k+1}j)2}\\
&=&\frac{(r^2+r-2k-2)\prod\limits_{j=r-2k}^{r-k-2}j}{(k+1)!}\cdot\frac{2(k+1)(r-1)}{r^2+r-2k-2}\cdot\frac{(r-2k-1)!}{(r-1)(\prod\limits^{r-k-3}_{j=k+1}j)2}\\
&=&r-k-2.
\end{eqnarray*}
Moreover, both of these pure diagrams have linear strands of length $r-2k-3$. This finishes the proof.
\end{proof}
\begin{cor}With the same notations as Theorem 4.3, for $1\leq i\leq r-2k-3$, we have
\begin{eqnarray*}
&&\dim K_{i,k+1}(\Sigma_k,\mathcal{O}_{\Sigma_k}(1))\\
&=&\frac{(r-k-2)![r^3-(i+k+1)r^2-(i+k+2)r+2(k+1)(i+k+1)]}{(k+1)!(i+k+1)(i-1)!(r-i-2k-3)!(r-i-k-2)(r-i-k-1)(r-i-k)}.
\end{eqnarray*}
\end{cor}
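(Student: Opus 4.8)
The plan is to read the formula directly off the two-term Boij--S\"{o}derberg decomposition established in the proof of Theorem 4.3. There we found that $\overline{\beta}(\Sigma_k)=c_{(2,\ldots,2);d}\,\pi_k((2,\ldots,2);d)+c_{(1,2,\ldots,2);d}\,\pi_k((1,2,\ldots,2);d)$, with the two coefficients summing to $1$, with $c_{(2,\ldots,2);d}=(r^2-2kr-r)/(r^2+r-2k-2)$ and hence $c_{(1,2,\ldots,2);d}=2(k+1)(r-1)/(r^2+r-2k-2)$. The associated degree sequences $(0,k+2,\ldots,r-k-2,r,r+1)$ and $(0,k+2,\ldots,r-k-2,r-1,r+1)$ agree in every entry except the penultimate one. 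In particular, for $1\le i\le r-2k-3$ both sequences have $e_i=i+k+1$, so the relation $p+q=e_p$ forces $q=k+1$, and both pure diagrams genuinely contribute to the position $(i,k+1)$. The Betti number we want is then $\dim K_{i,k+1}(\Sigma_k)=\deg(\Sigma_k)\,\overline{\beta}(\Sigma_k)_{i,k+1}$.

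First I would evaluate $\dim K_{i,k+1}$ of each pure diagram straight from Definition 3.2, i.e. as $(r-2k-1)!\prod_{j\ne i}|e_j-e_i|^{-1}$. The factor from $e_0=0$ is $i+k+1$; the factors from the shared linear segment $\{k+2,\ldots,r-k-2\}$ split into $(i-1)!$ (entries below $i$) and $(r-i-2k-3)!$ (entries above $i$); and the factor from $e_{r-2k-1}=r+1$ is $r-i-k$. The \emph{only} difference between the two diagrams sits in the penultimate factor: it is $|r-(i+k+1)|=r-i-k-1$ for $(2,\ldots,2)$, and $|(r-1)-(i+k+1)|=r-i-k-2$ for $(1,2,\ldots,2)$. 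Writing $A$ for the common prefactor $(r-2k-1)!\,[(i+k+1)(i-1)!(r-i-2k-3)!(r-i-k)]^{-1}$, the two Betti numbers are simply $A/(r-i-k-1)$ and $A/(r-i-k-2)$.

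Next I would assemble $\dim K_{i,k+1}(\Sigma_k)=\deg(\Sigma_k)[\,c_{(2,\ldots,2);d}\,A/(r-i-k-1)+c_{(1,2,\ldots,2);d}\,A/(r-i-k-2)\,]$, using the degree formula $\deg(\Sigma_k)=(r^2+r-2k-2)\prod_{j=r-2k}^{r-k-2}j/(k+1)!$ recorded in Theorem 4.3. Two simplifications carry the computation: the telescoping identity $(r-2k-1)!\prod_{j=r-2k}^{r-k-2}j=(r-k-2)!$, which absorbs the product into the prefactor, and the cancellation of the factor $r^2+r-2k-2$ between $\deg(\Sigma_k)$ and the denominators of both coefficients. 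After these, the expression collapses to
$$\frac{(r-k-2)!}{(k+1)!(i+k+1)(i-1)!(r-i-2k-3)!(r-i-k)}\left[\frac{r(r-2k-1)}{r-i-k-1}+\frac{2(k+1)(r-1)}{r-i-k-2}\right].$$

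The main obstacle is purely computational: placing the bracket over the common denominator $(r-i-k-1)(r-i-k-2)$ and expanding the numerator $r(r-2k-1)(r-i-k-2)+2(k+1)(r-1)(r-i-k-1)$. Collecting by powers of $r$ should return the cubic $r^3-(i+k+1)r^2-(i+k+2)r+2(k+1)(i+k+1)$, which, combined with the prefactor and the restored denominator $(r-i-k)(r-i-k-1)(r-i-k-2)$, yields exactly the stated formula. I expect no conceptual difficulty once Theorem 4.3 is in hand; the only care needed is tracking signs and constant terms in this expansion.
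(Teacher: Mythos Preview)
Your proposal is correct and follows essentially the same route as the paper: compute the $(i,k+1)$-entry of each of the two pure diagrams from Definition~3.2, weight by the coefficients $c_{(2,\ldots,2);d}$ and $c_{(1,2,\ldots,2);d}$ found in Theorem~4.3, multiply by $\deg(\Sigma_k)$, and simplify. Your write-up in fact makes the telescoping $(r-2k-1)!\prod_{j=r-2k}^{r-k-2}j=(r-k-2)!$ and the cancellation of $r^2+r-2k-2$ more explicit than the paper does, and the bracket expansion indeed yields the stated cubic.
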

\begin{proof}By the definition of pure diagrams, we have
$$\dim K_{i,k+1}(\pi_k((2,\cdots,2);d))=\frac{(r-2k-1)!}{(i+k+1)(p-1)!(r-i-2k-3)!(r-i-k-1)(r-i-k)},$$
$$\dim K_{i,k+1}(\pi_k((1,2,\cdots,2);d))=\frac{(r-2k-1)!}{(i+k+1)(i-1)!(r-i-2k-3)!(r-i-k-2)(r-i-k)}.$$
Plugging them in $\dim K_{i,k+1}$, we get
\begin{eqnarray*}
&&\dim K_{i,k+1}\\
&=&\deg(\Sigma_k)[c_{(2,\cdots,2);d}\dim K_{i,k+1}(\pi_k((2,\cdots,2);d))+c_{(1,2,\cdots,2);d}\dim K_{i,k+1}(\pi_k((1,2,\cdots,2);d))]\\
&=&\deg(\Sigma_k)[\frac{r^2-2kr-r}{r^2+r-2k-2}\frac{(r-2k-1)!}{(i+k+1)(i-1)!(r-i-2k-3)!(r-i-k-1)(r-i-k)}\\
&&+\frac{2(k+1)(r-1)}{r^2+r-2k-2}\frac{(r-2k-1)!}{(i+k+1)(i-1)!(r-i-2k-3)!(r-i-k-2)(r-i-k)}]\\
&=&\frac{\deg(\Sigma_k)(r-2k-1)![(r^2-2kr-r)(r-i-k-2)+2(k+1)(r-1)(r-i-k-1)]}{(r^2+r-2k-2)(i+k+1)(i-1)!(r-i-2k-3)!(r-i-k-2)(r-i-k-1)(r-i-k)}.
\end{eqnarray*}

Recall that $\deg(\Sigma_k)=\displaystyle\frac{(r^2+r-2k-2)\prod\limits_{j=r-2k}^{r-k-2}j}{(k+1)!}$. We have
$$\dim K_{i,k+1}=\frac{(r-k-2)![r^3-(i+k+1)r^2-(i+k+2)r+2(k+1)(i+k+1)]}{(k+1)!(i+k+1)(i-1)!(r-i-2k-3)!(r-i-k-2)(r-i-k-1)(r-i-k)}.$$
\end{proof}
For the case $k=0$, the secant variety is just the curve itself. The problem about the length of linear strand is the Green-Lazarsfeld Conjecture and was solved in \cite{EL15}. But I would like to calculate it again using Boij-S\"{o}derberg theory, which can give accurate values.
\begin{thm}Let $C\subset\mathbb{P}^r$ be a smooth projective curve of genus $2$ embedded via the linear system of a line bundle $L$ of degree at least $5$. Then we have:\\
(1)$\dim K_{r-2,1}(C,L)=r-2$;\\
(2)$\dim K_{i,1}(C,L)\not=0\iff 1\leq i\leq r-2$;\\
(3)For $1\leq i\leq r-2$, we have $\dim K_{i,1}(C,L)=\displaystyle\frac{(r-1)!(r^2-ir-2i-2)}{(i+1)(i-1)!(r-i)!}$.
\end{thm}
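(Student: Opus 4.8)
The plan is to run the same Boij--S\"{o}derberg machinery used for $k\geq1$, now specialized to $\Sigma_0=C$, where the situation collapses to only two pure diagrams and the coefficients can be pinned down exactly. First I would record the numerics: since $C$ has genus $2$ and $\deg L\geq5$, Riemann--Roch gives $h^0(L)=\deg L-1$, hence $\deg L=r+2$ and $\deg C=\deg L=r+2$. The curve is arithmetically Cohen--Macaulay by \cite{ENP}, so Theorem 3.3 applies to $S/I_C$. Specializing the notation of the previous section to $k=0$, the admissible degree sequences are $\pi_0(i_0;d)$ for $i_0\in\{0,1,2\}$ (the bound $i_0\leq2$ coming from \cite{SV09} exactly as before), namely the sequences obtained from $\{0,1,\dots,r+1\}$ by deleting $\{1,\,r+1-i_0\}$. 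Thus $\overline{\beta}(C)=c_0\pi_0(0;d)+c_1\pi_0(1;d)+c_2\pi_0(2;d)$ with $c_{i_0}\geq0$, and, since each pure diagram and $\overline{\beta}(C)$ all have multiplicity $1$ (Proposition 3.6 and Proposition 4.1), $c_0+c_1+c_2=1$.

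Next I would determine the coefficients from two Betti numbers that are already known. The value $\dim K_{r-2k-1,2k+2}=k+2$ quoted in the introduction, specialized to $k=0$, gives $\dim K_{r-1,2}(C,L)=2$, while Corollary 2.3 with $k=0$ gives $\dim K_{r-2,2}(C,L)=r-1$. Inspecting the degree sequences, only $\pi_0(1;d)$ and $\pi_0(2;d)$ satisfy $e_{r-1}=r+1$ and hence contribute to $K_{r-1,2}$, while only $\pi_0(2;d)$ satisfies $e_{r-2}=r$ and hence contributes to $K_{r-2,2}$. Evaluating the pure-diagram Betti numbers by Definition 3.2 turns these two identities into two linear equations in $c_1,c_2$; solving them gives $c_2=\tfrac{r}{r+2}$ and $c_1=\tfrac{2}{r+2}$. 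Since then $c_1+c_2=1$, the normalization forces $c_0=0$, so that
$$\beta(C)=(r+2)\Big[\tfrac{2}{r+2}\,\pi_0(1;d)+\tfrac{r}{r+2}\,\pi_0(2;d)\Big].$$
Establishing $c_0=0$ is precisely the input that makes the length statement sharp.

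With the decomposition in hand the three assertions follow by reading off and combining the two pure diagrams. For (2), the row-$1$ strand of $\pi_0(1;d)$ is supported in columns $1\leq i\leq r-2$ and that of $\pi_0(2;d)$ in columns $1\leq i\leq r-3$; as both coefficients are positive and all pure-diagram entries are nonnegative there is no cancellation, so $\dim K_{i,1}(C,L)\neq0\iff1\leq i\leq r-2$, the vanishing at $i=r-1$ being guaranteed by $c_0=0$. For (1) and (3) I would substitute the Definition 3.2 values $\dim K_{i,1}(\pi_0(1;d))$ and $\dim K_{i,1}(\pi_0(2;d))$ into the displayed decomposition; for $1\leq i\leq r-3$ both diagrams contribute while at $i=r-2$ only $\pi_0(1;d)$ does. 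After clearing the common factor $(r+2)$, the two factorial expressions combine, the bracketed numerator collapsing to $r^2-ir-2i-2$, and one obtains the closed form $\dim K_{i,1}(C,L)=\frac{(r-1)!(r^2-ir-2i-2)}{(i+1)(i-1)!(r-i)!}$ of (3); evaluating at $i=r-2$ returns $r-2$, which is (1).

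The genuinely delicate points are the two structural inputs rather than the final algebra. First, one must justify that no pure diagram other than $\pi_0(0;d)$, $\pi_0(1;d)$, $\pi_0(2;d)$ can enter the decomposition; this rests on the $N_{2,r-3}$ property and Danila vanishing quoted in the introduction to confine the Betti table to rows $0,1,2$, together with the \cite{SV09} bound $i_0\leq2$. Second, the normalization $\sum c_{i_0}=1$ via multiplicities is what converts the two known Betti numbers into a determination of all coefficients and forces $c_0=0$. Once those are secured, the remaining manipulation of the binomial and factorial expressions is routine, and the only real risk is a bookkeeping error in identifying which pure diagram contributes to each position.
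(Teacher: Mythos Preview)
Your proposal is correct and follows essentially the same Boij--S\"{o}derberg argument as the paper: both determine $c_{2;d}=\frac{r}{r+2}$ from $\dim K_{r-2,2}=r-1$ and $c_{1;d}=\frac{2}{r+2}$ from $\dim K_{r-1,2}=2$, observe that $c_{1;d}+c_{2;d}=1$ forces the remaining coefficient to vanish, and then read off parts (1)--(3) from the two contributing pure diagrams. Your write-up is if anything slightly more explicit about introducing $\pi_0(0;d)$ as a candidate and then eliminating it via the multiplicity normalization, but the substance is identical.
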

\begin{proof}
Similarly as above, only the sequences with $i_0>0$ contribute to $K_{r-1,2}$. The diagram associated to $i_0=1$ is characterised by $\{0,2,3,\cdots,r-1,r+1\}$ while that to $i_0=2$ is characterised by $\{0,2,3,\cdots,r-2,r,r+1\}$. The degree of the curve is $r+2$. Therefore we have
\begin{equation}
\frac{\dim K_{r-1,2}}{r+2}=\frac{2}{r+2}=c_{1;d}\cdot\frac{(r-1)!}{(r+1)(r-1)!}+c_{2;d}\cdot\frac{(r-1)!}{(r-3)(r-1)\cdots3}.\label{X2}
\end{equation}
On the other hand, notice that $\dim K_{r-2,2}=r-1$ and $\pi_0(2;d)$ is the only diagram contributing to $K_{r-2,2}$. By the definition, $\dim K_{r-2,2}(\pi_0(2;d))=\displaystyle\frac{(r-1)!}{r\cdot(r-2)!}$. Therefore
$$c_{2;d}\cdot\frac{(r-1)!}{r\cdot(r-2)!}=\frac{r-1}{r+2},$$
implying that $c_{2;d}=\displaystyle\frac{r}{r+2}$. Plugging the value of $c_{2;d}$ in \eqref{X2}, we find that $c_{1;d}=\displaystyle\frac{2}{r+2}$. Since $c_{1;d}$ and $c_{2;d}$ sum up to $1$, only these two corresponding pure diagrams contribute to the Betti diagram of the curve.Since only $\pi_0(1;d)$ contributes to $K_{r-2,1}$, we have
\begin{eqnarray*}
\frac{\dim K_{r-2,1}}{r+2}&=&c_{1;d}\dim K_{r-2,1}(\pi_0(1;d))
\end{eqnarray*}
\begin{eqnarray*}
\frac{\dim K_{r-2,1}}{r+2}&=&\frac{2}{r+2}\cdot\frac{(r-1)!}{(r-1)(r-3)!2}\\
\dim K_{r-2,1}&=&r-2.
\end{eqnarray*}
Since $\pi_0(1;d)$ has a linear strand of length $r-2$ while $\pi_0(2;d)$ has that of length $r-3$, their sum has a linear strand of length $r-2$.

For (3), similar to the calculation for $k\geq1$, we have
$$\dim K_{i,1}(\pi_0(1;d))=\frac{(r-1)!}{(i+1)(i-1)!(r-i-2)!(r-i)},$$
$$\dim K_{i,1}(\pi_0(2;d))=\frac{(r-1)!}{(i+1)(i-1)!(r-i-3)!(r-i-1)(r-i)}.$$
Plugging these into $\dim K_{i,1}$, we get
\begin{eqnarray*}
\dim K_{i,1}&=&\deg(C)[c_{1;d}\dim K_{i,1}(\pi_0(1;d))+c_{2;d}\dim K_{i,1}(\pi_0(2;d))]\\
&=&(r+2)[\frac{2}{r+2}\frac{(r-1)!}{(i+1)(i-1)!(r-i-2)!(r-i)}\\
&&+\frac{r}{r+2}\frac{(r-1)!}{(i+1)(i-1)!(r-i-3)!(r-i-1)(r-i)}]\\
&=&\frac{(r-1)!(r^2-ir-2i-2)}{(i+1)(i-1)!(r-i)!}.
\end{eqnarray*}
\end{proof}
At the end of this paper, I will prove a complemental result of $N_{k+2,r-2k-3}$. We can observe that $K_{i,2k+2}(\Sigma_k,\mathcal{O}_{\Sigma_k}(1))\not=0$ if and only if $r-2k-2\leq i\leq r-2k-1$. For curves of genus $2$, this can be easily seen from the shapes of pure diagrams contributing to the Betti diagrams of secant varieties, but I would prove a more general result for curves of general genus.
\begin{prop}Let $C$ be a curve of genus $g\geq1$. Let $L$ be a very ample line bundle with $\deg(L)\geq2g+2k+1$ and hence $H^0(L-K)\not=0$. Assume that the embedding $C\hookrightarrow\mathbb{P}^r$ is defined by the complete linear system of $L$. Let $\Sigma_k$ be the $k$-th order secant variety of $C$. Then
$$K_{i,2k+2}(\Sigma_k,\mathcal{O}_{\Sigma_k}(1))\not=0\iff r-g-2k\leq i\leq r-1-2k.$$
\end{prop}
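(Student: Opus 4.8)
The plan is to transport the bottom-row statement to the top of the Betti table of the canonical module by Green's duality, and then to a vector-bundle cohomology computation on the resolution $B^k$. First I would apply Proposition 2.1 with $X=\Sigma_k$, $B=\mathcal{O}_{\Sigma_k}$, $n=\dim\Sigma_k=2k+1$ and $q=2k+2$. The cohomological hypotheses are the vanishing of the intermediate cohomology $H^i(\Sigma_k,\mathcal{O}_{\Sigma_k}(l))$ for $1\le i\le 2k$ in the relevant degrees, exactly as used in Section 2; granting these, the duality reads
$$K_{i,2k+2}(\Sigma_k,\mathcal{O}_{\Sigma_k}(1))^\vee\cong K_{r-2k-1-i,\,0}(\Sigma_k,K_{\Sigma_k};\mathcal{O}_{\Sigma_k}(1)).$$
Putting $p=r-2k-1-i$, the asserted interval $r-g-2k\le i\le r-1-2k$ becomes precisely $0\le p\le g-1$, so the proposition is equivalent to $K_{p,0}(\Sigma_k,K_{\Sigma_k};\mathcal{O}_{\Sigma_k}(1))\ne0\iff 0\le p\le g-1$. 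The right endpoint $p=0$ is the corner $K_{0,0}=H^0(\Sigma_k,K_{\Sigma_k})$, nonzero for $g\ge1$, and no column beyond $i=r-2k-1$ occurs since $r-2k-1$ is the projective dimension of $S/I_{\Sigma_k}$; so only the left half of the row is really at issue.

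Next I would turn the row-$0$ groups into honest cohomology. Since $H^0(\Sigma_k,K_{\Sigma_k}(-1))\cong H^{2k+1}(\Sigma_k,\mathcal{O}_{\Sigma_k}(1))^\vee=0$ (as in the proof of Corollary 2.3), the group $K_{p,0}$ is the kernel of $\wedge^pH^0(\mathcal{O}_{\Sigma_k}(1))\otimes H^0(K_{\Sigma_k})\to\wedge^{p-1}H^0(\mathcal{O}_{\Sigma_k}(1))\otimes H^0(K_{\Sigma_k}(1))$, and the kernel bundle $M=\ker(H^0(\mathcal{O}_{\Sigma_k}(1))\otimes\mathcal{O}_{\Sigma_k}\to\mathcal{O}_{\Sigma_k}(1))$ identifies it with $H^0(\Sigma_k,\wedge^pM\otimes K_{\Sigma_k})$. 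To compute this on a smooth model I would pull back along $\beta:B^k=\mathbb{P}(E_L)\to\Sigma_k$: because $\beta$ is a rational resolution one has $\beta_*\omega_{B^k}=K_{\Sigma_k}$ and, by Grauert--Riemenschneider, the higher direct images of $\wedge^p\beta^*M\otimes\omega_{B^k}$ vanish, whence $K_{p,0}(\Sigma_k,K_{\Sigma_k};\mathcal{O}_{\Sigma_k}(1))\cong H^0(B^k,\wedge^p\beta^*M\otimes\omega_{B^k})$.

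Finally I would push down along the bundle projection $\pi:B^k\to C_{k+1}$, using $\mathcal{O}_{B^k}(1)=\beta^*\mathcal{O}_{\mathbb{P}^r}(1)$ and $\omega_{B^k}$ of the form $\mathcal{O}_{\mathbb{P}(E_L)}(-(k+1))\otimes\pi^*(\det E_L\otimes\omega_{C_{k+1}})$ to reduce everything to cohomology on the symmetric product $C_{k+1}$ of bundles built from $E_L$ and $L$. The number $g$ should enter through the canonical bundle of $C$: the relevant cutoff is controlled by the evaluation kernel $M_K=\ker(H^0(C,K)\otimes\mathcal{O}_C\to K)$, which has rank $g-1$, satisfies $\wedge^pM_K=0$ for $p\ge g$, and has $\det M_K=-K$. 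For $p\ge g$ this rank bound, propagated through $C_{k+1}$, should force the cohomology to vanish; for $0\le p\le g-1$ I would produce explicit nonzero sections, the extremal one at $p=g-1$ relying on $\det M_K=-K$ together with the hypothesis $H^0(L-K)\ne0$. The case $k=0$ is the classical statement that the bottom row $K_{i,2}(C,L)$ of a genus-$g$ curve in a high-degree embedding is nonzero exactly for $r-g\le i\le r-1$, and it is the skeleton of the general argument.

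I expect the main obstacle to be precisely this last step: establishing, on $C_{k+1}$, both the vanishing for $p\ge g$ and the non-vanishing throughout $0\le p\le g-1$, i.e. pinning the cutoff at $g-1$. This needs a Bott/Le Potier-type control of how $\wedge^p$ of the kernel bundle meets the dualizing sheaf of the symmetric product, and an explicit construction of the top generator; the hypothesis $H^0(L-K)\ne0$ is exactly what keeps the top of the strand from collapsing. A secondary point to verify with care is that the intermediate-cohomology vanishings required by the duality in the first step still hold under the present, weaker hypothesis $H^0(L-K)\ne0$, rather than only under the degree bound of Section 2.
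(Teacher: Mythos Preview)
Your opening move---applying the duality of Proposition~2.1 to convert $K_{i,2k+2}(\Sigma_k,\mathcal{O}_{\Sigma_k}(1))$ into $K_{p,0}(\Sigma_k,K_{\Sigma_k};\mathcal{O}_{\Sigma_k}(1))$ with $p=r-2k-1-i$, and rewriting the target range as $0\le p\le g-1$---is exactly what the paper does. After that the two arguments diverge sharply.

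The paper never passes to $B^k$ or $C_{k+1}$, never invokes Grauert--Riemenschneider, and never computes $H^0(\wedge^p M\otimes K_{\Sigma_k})$. One direction of the equivalence is simply quoted from \cite{ENP}. For the remaining direction the paper stays inside the Koszul complex and, using the identifications $H^0(\mathcal{O}_{\Sigma_k}(1))=H^0(C,L)$ and $H^0(K_{\Sigma_k})=S^{k+1}H^0(C,K_C)$, writes down an explicit nonzero element of the kernel of
\[
\bigwedge^{p}H^0(L)\otimes H^0(K_{\Sigma_k})\longrightarrow \bigwedge^{p-1}H^0(L)\otimes H^0(K_{\Sigma_k}(1)).
\]
Namely, choose linearly independent $f_0,\dots,f_p\in H^0(C,K_C)$ (this is possible precisely when $p\le g-1$), a nonzero $s\in H^0(C,L-K_C)$ (this is exactly where the hypothesis $H^0(L-K)\ne0$ enters), and any nonzero $F\in S^kH^0(C,K_C)$; then
\[
\sum_{j=0}^{p}(-1)^j\, f_0s\wedge\cdots\wedge\widehat{f_js}\wedge\cdots\wedge f_ps\ \otimes\ Ff_j
\]
maps to zero. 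That is the entire argument for nonvanishing.

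Your geometric route is not unreasonable, but it is substantially heavier than what is required, and the step you yourself single out as the obstacle is a real one that you have not resolved: the kernel bundle $\beta^*M$ on $B^k$ has rank $r$, not $g-1$, so the vanishing for $p\ge g$ does not follow from any rank bound on $B^k$; to make your idea work you would have to exhibit, after the pushforward to $C_{k+1}$, a factor governed by the rank-$(g-1)$ bundle $M_{K_C}$ on $C$, and you give no mechanism for this ``propagation''. The paper avoids the issue entirely by outsourcing that half to \cite{ENP} and handling the other half with the one-line syzygy above. What your approach would buy, if completed, is a self-contained proof of both directions without citing \cite{ENP}; what the paper's approach buys is brevity and transparency about where $g-1$ and the hypothesis $H^0(L-K)\ne0$ come in.
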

\begin{proof}The implication from left to right was proved in Theorem 5.2 (4) of \cite{ENP}.\\
Now assume that $r-g-2k\leq i\leq r-1-2k$. We want to show $K_{i,2k+2}(\Sigma_k,\mathcal{O}_{\Sigma_k}(1))\not=0$. We essentially use the ideas in the proof of Proposition 5.1 of \cite{EL12}. Notice that we have the duality $K_{i,2k+2}(\Sigma_k,\mathcal{O}_{\Sigma_k}(1))\cong K_{r-(2k+1)-i,0}(\Sigma_k,\omega_{\Sigma_k};\mathcal{O}_{\Sigma_k}(1))^\vee$. Essentially we have to show the morphism
$$\bigwedge\limits^i H^0(\Sigma_k,\mathcal{O}_{\Sigma_k}(1))\otimes H^0(\Sigma_k,\omega_{\Sigma_k})\to\bigwedge\limits^{i-1} H^0(\Sigma_k,\mathcal{O}_{\Sigma_k}(1))\otimes H^0(\Sigma_k,\omega_{\Sigma_k}(1))$$
is not injective for $0\leq i\leq g-1$. We are interested in the case $i\geq1$. Take $f_0,\cdots,f_i$ linearly independent sections of $H^0(C,K_C)$ and $s\in H^0(C,L-K)$. Then $\forall F\in S^kH^0(C,K_C)$ nonzero, the element
$$\sum\limits_{j=0}^if_0s\wedge\cdots\wedge\widehat{f_js}\wedge\cdots\wedge f_is\otimes Ff_j$$ is mapped to $0$. Here we identify $H^0(\Sigma_k,\omega_{\Sigma_k})$ with $S^{k+1}H^0(C,K_C)$ and $H^0(\Sigma_k,\mathcal{O}_{\Sigma_k}(1))$ with $H^0(C,L)$.
\end{proof}

\bibliography{Syzygies_of_secant_varieties_of_curves_of_genus_2_v2}{}
\bibliographystyle{alpha}
\Addresses

\end{document}